\newtheorem{theorem}{Theorem}[section]
\newtheorem{lemma}[theorem]{Lemma}
\newtheorem{proposition}[theorem]{Proposition}
\newtheorem{definition}[theorem]{Definition}
\newtheorem{remark}[theorem]{Remark}
\numberwithin{equation}{section}
\DeclareMathOperator{\cl}{cl}
\DeclareMathOperator{\irr}{irr}
\numberwithin{equation}{section}
\title{Polymatroids, closure operators and lattices}
\author{William Gustafson}
\affil{University of Kentucky\\ Department of Mathematics\\ Lexington KY 40506-0027 USA\\
\texttt{william.gustafson@uky.edu}}
\begin{document}
\maketitle

\begin{abstract}
	We study the closure operators of polymatroids from a lattice theoretic
	point of view. We show that polymatroid closure operators relate to
	lattices enriched with a generating set in the same way that matroids
	relate to geometric lattices. Through this relation we define
	a notion of minors for lattices enriched with a generating set.
	For the lattice of flats of a graphic matroid,
	the minors of the lattice are shown to correspond to simple minors of the graph
	when the vertices are labeled and the edges unlabeled.
	This correspondence is generalized to all polymatroids.
\end{abstract}

\section{Introduction}
	Polymatroids were introduced by Edmonds in~\cite{edmondsOrig} as a generalization
of matroids in connection with optimization theory. Edmonds introduced
polymatroids as certain polytopes lying in the nonnegative orthant of the real
vector space spanned by a ground set.
Vectors in the nonnegative orthant can be viewed as nonnegative real weightings
of the ground set, subsets of the ground set corresponding to 0,1 vectors.
Conceptually the points in a polymatroid encapsulate independent weightings.
Given a matroid~$M$ there is an associated polymatroid which is the convex hull of
the 0,1 vectors which correspond to independent sets of~$M$.
In this sense polymatroids are a generalization of matroids. Edmonds also
gave a description of polymatroids in terms of a rank function,
see~\cite[Theorem 14]{edmondsOrig}. We will prefer this rank function definition
given below. For a set~$E$ we let~$B_E$ denote the poset of all subsets of~$E$
ordered by inclusion.

\begin{definition}
	A \emph{polymatroid} on the ground set~$E$
	is a function~$r:B_E\rightarrow\mathbb{R}_{\ge0}$
	satisfying the following conditions for all~$X,Y\subseteq E$.
	\begin{align}
		&r(\emptyset)=0,\\
		&\text{If }X\subseteq Y\text{ then }r(X)\le r(Y),\\
		&\label{submodularity}r(X\cap Y)+r(X\cup Y)\le r(X)+r(Y).
	\end{align}
\end{definition}
		
Condition~\ref{submodularity} is referred to as \emph{submodularity} of the
function~$r$.
When the polymatroid~$r$ is integer valued, and for all~$X\subseteq E$ satisfies~$r(X)
\le\lvert{X}\rvert$ then it is (the rank function of) a matroid.

Many notions from matroid theory carry over directly, or nearly so,
when stated in terms of the rank function. Given a polymatroid~$r$ on~$E$
we say that an element~$e\in E$ is a \emph{loop} with respect to~$r$
when~$r(\{e\})=0$. Two elements~$e,f\in E$ are said to be \emph{parallel}
with respect to~$r$ when~$r(\{e,f\})=r(\{e\})=r(\{f\})$. The \emph{parallel
class of~$e$} with respect to~$r$ is the collection of elements in~$E$
which are parallel to~$e$.
A polymatroid is \emph{simple} if it has no loops and all parallel classes are trivial.

Our main interest in the present work is the closure operator of polymatroids.
The \emph{closure operator} of a polymatroid~$r$ on~$E$ is the map~$\overline{\ \cdot\ }:
B_E\rightarrow B_E$ defined for~$X\subseteq E$ by
\[
\overline{X}=\{e\in E:r(X)=r(X\cup\{e\})\}.
\]
The submodularity of~$r$ implies that~$r(\overline{X})=r(X)$
for any~$X\subseteq E$.
Sets of the form~$\overline{X}$ are referred to as~\emph{$r$-closed sets}
or \emph{flats} of~$r$. Edmonds showed that the set of flats of a polymatroid
is closed under intersection (\cite[Theorem 25]{edmondsOrig}). Since the set of
flats of a polymatroid is finite and has a maximal element, namely~$E$, this implies
that the set of flats ordered under inclusion forms a lattice. The meet in this
lattice is intersection and the join is given by~$X\vee Y=\overline{X\cup Y}$.

Given a matroid~$r$ the closure operator uniquely determines~$r$. For polymatroids
this is not the case as the closure operator has no information about how
much the rank of sets may differ. For example, define a polymatroid
on the ground set~$E=\{1,2\}$ by assigning rank value~1 to~$\{1\}$ and to~$\{2\}$
and assigning any rank value in the interval~$(1,2]$ to the set~$\{1,2\}$.
The resulting closure operator is the identity map on~$B_E$ regardless of the choice
of the rank of~$\{1,2\}$.

\section{Polymatroids and generator enriched lattices}
\label{polymatroids section}
	We now introduce an object which will be seen to correspond to polymatroid
closure operators in the same way geometric lattices correspond to matroids.

\begin{definition}
	A generator enriched lattice is a pair~$(L,G)$ in which~$L$ is a finite lattice
	and~$G\subseteq L\setminus\{\widehat{0}\}$ generates the lattice~$L$ via the
	join operation.
\end{definition}

Note that if~$(L,G)$ is a generator enriched lattice, the set~$G$ necessarily contains
the set of join irreducibles of~$L$ which we will denote as~$\irr(L)$.
A generator enriched lattice of the form~$(L,\irr(L))$ will be said to be \emph{minimally generated}.

A lattice is typically depicted via its Hasse diagram.
The Hasse diagram is not enough information to specify a generator enriched lattice
since it does not describe the generating set.
Instead a generator enriched lattice
may be depicted via a diagram analogous to Cayley graphs for groups
with a generating set. Given a generator enriched lattice~$(L,G)$ the
associated diagram has vertex set~$L$, and directed
edges~$(\ell,\ell\vee g)$ for~$\ell\in L$ and~$g\in G$
such that~$\ell\ne\ell\vee g$. Just as with Hasse diagrams all diagrams
of generator enriched lattices will be depicted so that the edges are directed upwards.
The diagram of a generator enriched lattice determines the underlying lattice:
the order relation~$\ell_1\le\ell_2$ holds when there is a directed path from~$\ell_1$ to~$\ell_2$
in the diagram. 
The minimal element~$\widehat{0}$ is the unique source vertex.
The generating set consists of the elements adjacent to~$\widehat{0}$.
See Figure~\ref{fig cayleys} for examples of diagrams of generator enriched lattices.

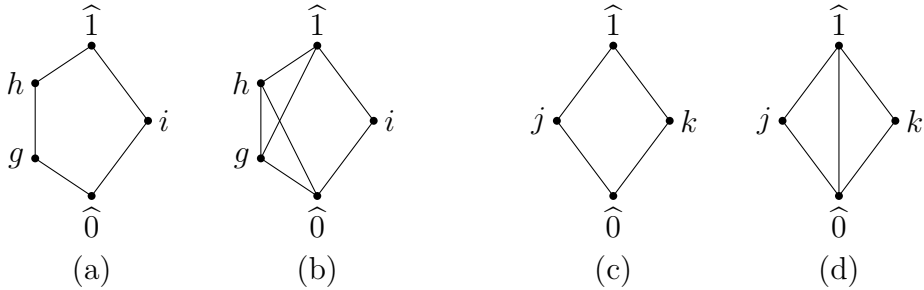
\begin{figure}[ht]
	\centering
	\begin{tikzpicture}
	\node at (0,0)
	{
	\begin{tikzpicture}
	\begin{scope}
		\fill (0,0) circle (1.5pt) coordinate (123);
		\fill (-.75,-1.5) circle (1.5pt) coordinate (1);
		\fill (-.75,-.5) circle (1.5pt) coordinate (2);
		\fill (.75,-1) circle (1.5pt) coordinate (3);
		\fill (0,-2) circle (1.5pt) coordinate (0);
		\node[below]at(0){$\widehat{0}$};
		\node[left]at(1){$g$};
		\node[left]at(2){$h$};
		\node[right]at(3){$i$};
		\node[above]at(123){$\widehat{1}$};
		\node at(0,-3) {(a)};

		\draw (0)--(1)--(2)--(123)--(3)--(0);
	\end{scope}
	\end{tikzpicture}
	};
	\node at (3,0)
	{
	\begin{tikzpicture}
	\begin{scope}
		\fill (0,0) circle (1.5pt) coordinate (123);
		\fill (-.75,-1.5) circle (1.5pt) coordinate (1);
		\fill (-.75,-.5) circle (1.5pt) coordinate (2);
		\fill (.75,-1) circle (1.5pt) coordinate (3);
		\fill (0,-2) circle (1.5pt) coordinate (0);
		\node[below]at(0){$\widehat{0}$};
		\node[left]at(1){$g$};
		\node[left]at(2){$h$};
		\node[right]at(3){$i$};
		\node[above]at(123){$\widehat{1}$};
		\node at(0,-3){(b)};

		\draw (0)--(1)--(2)--(123)--(3)--(0);
		\draw (0)--(2);
		\draw (1)--(123);
	\end{scope}
	\end{tikzpicture}
	};
	\node at (7,0)
	{
	\begin{tikzpicture}
	\begin{scope}
		\fill (0,0) circle (1.5pt) coordinate (123);
		\fill (-.75,-1) circle (1.5pt) coordinate (1);
		\fill (.75,-1) circle (1.5pt) coordinate (2);
		\fill (0,-2) circle (1.5pt) coordinate (0);
		\node[below]at(0){$\widehat{0}$};
		\node[left]at(1){$j$};
		\node[right]at(2){$k$};
		\node[above]at(123){$\widehat{1}$};
		\node at (0,-3) {(c)};

		\draw (0)--(1)--(123)--(2)--(0);
	\end{scope}
	\end{tikzpicture}
	};
	\node at (10,0)
	{
	\begin{tikzpicture}
	\begin{scope}
		\fill (0,0) circle (1.5pt) coordinate (123);
		\fill (-.75,-1) circle (1.5pt) coordinate (1);
		\fill (.75,-1) circle (1.5pt) coordinate (2);
		\fill (0,-2) circle (1.5pt) coordinate (0);
		\node[below]at(0){$\widehat{0}$};
		\node[left]at(1){$j$};
		\node[right]at(2){$k$};
		\node[above]at(123){$\widehat{1}$};
		\node at(0,-3){(d)};

		\draw (0)--(1)--(123)--(2)--(0)--(123);
	\end{scope}
	\end{tikzpicture}
	};
\end{tikzpicture}

	\caption{In (a) is the Hasse diagram of a lattice~$L$
	with~$\irr(L)=\{g,h,i\}$, and in (b) is
	the diagram of the associated minimally generated lattice~$(L,\irr(L))$.
	In (c) is the Hasse diagram of the Boolean algebra~$B_2$, which is also
	the diagram of the minimally generated lattice~$(B_2,\{j,k\})$,
	and in (d) is the diagram of the generator enriched lattice~$(B_2,\{j,k,\widehat{1}\})$.
	}
	\label{fig cayleys}
\end{figure}

For every polymatroid we have an associated generator enriched lattice.

\begin{definition}
	Given a polymatroid~$r:E\rightarrow\mathbb{R}_{\ge0}$ the \emph{generator enriched lattice of flats}
	is the generator enriched lattice~$(L,G)$ where
	\begin{align*}
		L&=\{\overline{X}:X\subseteq E\},\\
		G&=\{\overline{\{e\}}:e\in E,\ r(\{e\})\ne 0\}.
	\end{align*}
\end{definition}

See Figure~\ref{fig genlatt of flats} for examples of polymatroids and the associated
generator enriched lattice of flats.

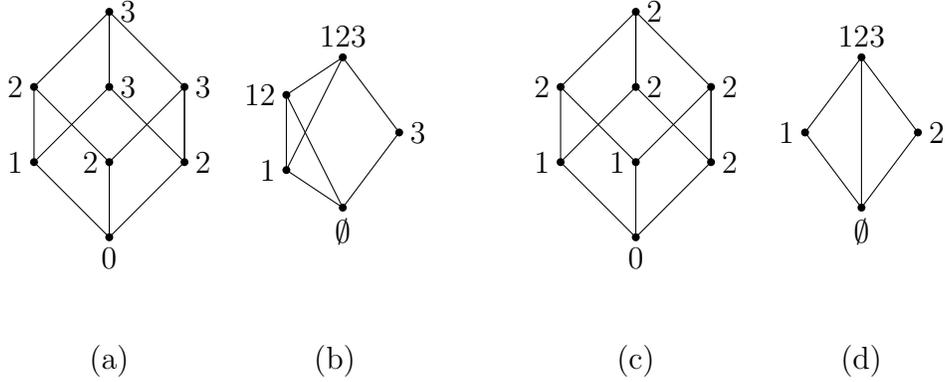
\begin{figure}[ht]
	\centering
	\begin{tikzpicture}
	\node at(0,0)
	{
	\begin{tikzpicture}
	\begin{scope}
		\fill(0,0)circle(1.5pt)coordinate(123);
		\fill(-1,-1)circle(1.5pt)coordinate(12);
		\fill(0,-1)circle(1.5pt)coordinate(13);
		\fill(1,-1)circle(1.5pt)coordinate(23);
		\fill(-1,-2)circle(1.5pt)coordinate(1);
		\fill(0,-2)circle(1.5pt)coordinate(2);
		\fill(1,-2)circle(1.5pt)coordinate(3);
		\fill(0,-3)circle(1.5pt)coordinate(e);
		\node[below]at(e){0};
		\node[left]at(1){1};
		\node[left]at(2){2};
		\node[right]at(3){2};
		\node[left]at(12){2};
		\node[right]at(13){3};
		\node[right]at(23){3};
		\node[right]at(123){3};
		\draw(e)--(1)--(12)--(2)--(e)--(3)--(23)--(2);
		\draw(1)--(13)--(3)--(23)--(123)--(13)--(123)--(12);
	\end{scope}
	\end{tikzpicture}
	};
	\node at(7,0)
	{
	\begin{tikzpicture}
	\begin{scope}
		\fill(0,0)circle(1.5pt)coordinate(123);
		\fill(-1,-1)circle(1.5pt)coordinate(12);
		\fill(0,-1)circle(1.5pt)coordinate(13);
		\fill(1,-1)circle(1.5pt)coordinate(23);
		\fill(-1,-2)circle(1.5pt)coordinate(1);
		\fill(0,-2)circle(1.5pt)coordinate(2);
		\fill(1,-2)circle(1.5pt)coordinate(3);
		\fill(0,-3)circle(1.5pt)coordinate(e);
		\node[below]at(e){0};
		\node[left]at(1){1};
		\node[left]at(2){1};
		\node[right]at(3){2};
		\node[left]at(12){2};
		\node[right]at(13){2};
		\node[right]at(23){2};
		\node[right]at(123){2};
		\draw(e)--(1)--(12)--(2)--(e)--(3)--(23)--(2);
		\draw(1)--(13)--(3)--(23)--(123)--(13)--(123)--(12);
	\end{scope}
	\end{tikzpicture}
	};
	\node at (3,0)
	{
	\begin{tikzpicture}
	\begin{scope}
		\fill (0,0) circle (1.5pt) coordinate (123);
		\fill (-.75,-1.5) circle (1.5pt) coordinate (1);
		\fill (-.75,-.5) circle (1.5pt) coordinate (2);
		\fill (.75,-1) circle (1.5pt) coordinate (3);
		\fill (0,-2) circle (1.5pt) coordinate (0);
		\node[below]at(0){$\emptyset$};
		\node[left]at(1){$1$};
		\node[left]at(2){$12$};
		\node[right]at(3){$3$};
		\node[above]at(123){$123$};

		\draw (0)--(1)--(2)--(123)--(3)--(0);
		\draw (0)--(2);
		\draw (1)--(123);
	\end{scope}
	\end{tikzpicture}
	};
	\node at (10,0)
	{
	\begin{tikzpicture}
	\begin{scope}
		\fill (0,0) circle (1.5pt) coordinate (123);
		\fill (-.75,-1) circle (1.5pt) coordinate (1);
		\fill (.75,-1) circle (1.5pt) coordinate (2);
		\fill (0,-2) circle (1.5pt) coordinate (0);
		\node[below]at(0){$\emptyset$};
		\node[left]at(1){$1$};
		\node[right]at(2){$2$};
		\node[above]at(123){$123$};

		\draw (0)--(1)--(123)--(2)--(0)--(123);
	\end{scope}
	\end{tikzpicture}
	};
	\node at(0,-3){(a)};
	\node at(3,-3){(b)};
	\node at(7,-3){(c)};
	\node at(10,-3){(d)};
\end{tikzpicture}

	\caption{In (a) and (c) are polymatroids, and in (b) and (d)
	respectively are the diagrams of the generator enriched lattice of flats.}
	\label{fig genlatt of flats}
\end{figure}

Let~$r$ and~$s$ be two simple matroids with lattice of flats~$L$ and~$K$ respectively.
A \emph{strong map} between~$r$ and~$s$ is a map~$f:L\rightarrow K$ which
is join-preserving and satisfies~$f(\irr(L))\subseteq\irr(K)\cup\{\widehat{0}_K\}$.
Strong maps between simple matroids were introduced by Higgs in~\cite{higgs}.
The notion of structure preserving maps between generator enriched lattices defined below
generalizes strong maps between simple matroids.

\begin{definition}
	Let~$(L,G)$ and~$(K,H)$ be generator enriched lattices. A \emph{strong map}
	from~$(L,G)$ to~$(K,H)$ is a map~$f:L\rightarrow K$ which is
	join-preserving and satisfies~$f(G)\subseteq H\cup\{\widehat{0}_K\}$.
	This will be abbreviated by saying that~$f:(L,G)\rightarrow(K,H)$
	is a strong map.
\end{definition}

A strong map~$f:(L,G)\rightarrow(K,H)$ is said to be \emph{injective} when it
is injective as a map on the underlying lattices, and \emph{surjective}
when~$f(G\cup\{\widehat{0}_L\})=H\cup\{\widehat{0}_K\}$. Two generator enriched lattices are said
to be isomorphic when there is a strong bijection between them.

Strong maps between matroids
may be equivalently defined in several ways, for instance as join preserving
maps which also preserve the relation ``covers or equals'';
see~\cite[Proposition 2]{strongMapsCrapo}.
This definition does not extend to the setting of generator enriched lattices,
for example mapping atoms of a Boolean algebra to any elements of a chain
will induce a strong map which need not preserve covers.

Let~$(L,G)$ be a generator enriched lattice and let~$E$ be a ground set. Let~$\mathscr{B}_E$ denote the
generator enriched lattice~$(B_E,\irr(B_E))$.
Given any map~$f:E\rightarrow G\cup\{\widehat{0}_L\}$
we have an associated strong map~$F:\mathscr{B}_E\rightarrow(L,G)$
defined by
\[
F(X)=\bigvee_{x\in X}f(x),
\]
for~$X\subseteq E$. We refer to the map~$F$ as the \emph{strong map induced by~$f$}.

A certain nonstandard definition of matroids is useful for our lattice theoretic
view of polymatroids. A matroid on a ground set~$E$ may be defined as
a strong surjection~$f$ from the Boolean algebra~$\mathscr{B}_E$ onto a generator enriched lattice
of the form~$(L,\irr(L))$ for some geometric
lattice~$L$. In fact if one requires the map~$f$ to be strong in the sense
of~\cite{strongMapsCrapo}, the image is necessarily geometric;
see~\cite[Proposition 9.12]{combinatorialGeometries}.
This view of matroids is briefly mentioned in~\cite[pp. 9.8-9.9]{combinatorialGeometries}.
Accordingly, we now turn our focus to strong surjections from Boolean
algebras onto generator enriched lattices,
and showing such maps are in bijection with polymatroid closure operators
(when the codomain generator enriched lattice is considered up to isomorphism).

\subsection{Strong surjections and closure operators}

The following construction associates a closure operator to any
strong surjection from~$\mathscr{B}_E$ onto a generator enriched lattice~$(L,G)$.
This is a standard construction in the theory of Galois connections. Let~$\theta:\mathscr{B}_E\rightarrow(L,G)$
be a strong surjection.
Define a right-sided inverse~$\phi$ to~$\theta$ by
\[\phi(\ell)=\bigcup_{X\in\theta^{-1}(\ell)}X.\]
The fact that~$\theta\circ\phi$ is the identity follows directly from the fact that~$\theta$
is join-preserving. We define the closure operator associated to~$\theta$
to be the map~$\cl_\theta=\phi\circ\theta:B_E\rightarrow B_E$.
One may associate a generator enriched lattice~$(K,H)$ to such a closure operator by
setting
	\[K=\{\cl_\theta(X):X\subseteq E\},\]
and
	\[H=\{\cl_\theta(\{e\}):e\in E\}\setminus\{\cl_\theta(\emptyset)\}.\]
\begin{sloppypar}
This generator enriched lattice~$(K,H)$ is isomorphic to~$(L,G)$ via the isomorphism~$\phi:(L,G)\rightarrow
(K,H)$, which has inverse~$\phi^{-1}=\theta|_K$.
\end{sloppypar}

The following result says that a polymatroid can be equivalently defined as a strong surjection
from a Boolean algebra together
with a strictly order-preserving and submodular function with nonnegative real values.

\begin{proposition}
\label{polymatroids from genlatts}
	Let~$(L,G)$ be a generator enriched lattice and let~$\theta:\mathscr{B}_E\rightarrow(L,G)$
	be a strong surjection. For any strictly order-preserving and
	submodular function~$r:L\rightarrow\mathbb{R}_{\ge0}$ which maps~$\widehat{0}_L$
	to~$0$, the
	composition~$r\circ\theta:B_E\rightarrow\mathbb{R}_{\ge0}$
	is a polymatroid whose generator enriched lattice of flats is isomorphic
	to~$(L,G)$. Furthermore the polymatroid is simple if and only
	if~$\theta|_{\irr(B_E)\cup\{\emptyset\}}$ is injective.

	Conversely given a polymatroid~$s:B_E\rightarrow\mathbb{R}_{\ge0}$
	with generator enriched lattice of flats~$(L,G)$, let~$\theta:\mathscr{B}_E\rightarrow(L,G)$
	be the strong map induced by the map~$e\mapsto\overline{\{e\}}$.
	There is a strictly order-preserving and submodular function~$r:L\rightarrow\mathbb{R}_{\ge0}$
	such that~$s=r\circ\theta$.
\end{proposition}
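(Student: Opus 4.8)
The plan is to handle the two directions separately, each time exploiting the interaction between the order hypotheses on $r$ and the join-preservation of $\theta$. For the forward direction write $\rho=r\circ\theta$. Since $\theta$ preserves joins (including the empty one), $\theta(\emptyset)=\widehat{0}_L$ and so $\rho(\emptyset)=r(\widehat{0}_L)=0$. Monotonicity of $\rho$ is immediate from the monotonicity of $\theta$ and of $r$. For submodularity I would use $\theta(X\cup Y)=\theta(X)\vee\theta(Y)$ together with $\theta(X\cap Y)\le\theta(X)\wedge\theta(Y)$, and then combine the monotonicity of $r$ on the meet term with the submodularity of $r$ on $L$:
\[
\rho(X\cap Y)+\rho(X\cup Y)\le r(\theta(X)\wedge\theta(Y))+r(\theta(X)\vee\theta(Y))\le r(\theta(X))+r(\theta(Y)).
\]

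The central step is to identify the flats of $\rho$ with the closed sets of $\cl_\theta$, and here the strict order-preservation of $r$ is what does the work: for $a\le b$ in $L$ one has $r(a)=r(b)$ if and only if $a=b$, and in particular $r(\ell)=0$ exactly when $\ell=\widehat{0}_L$. Using $\theta(X\cup\{e\})=\theta(X)\vee\theta(\{e\})$ this yields
\[
e\in\overline{X}\iff r(\theta(X)\vee\theta(\{e\}))=r(\theta(X))\iff\theta(\{e\})\le\theta(X),
\]
and the same equivalence characterizes membership in $\cl_\theta(X)=\phi(\theta(X))$, so $\overline{X}=\cl_\theta(X)$ for all $X$. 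Thus the lattice of flats of $\rho$ is exactly $(K,H)$, which is isomorphic to $(L,G)$ via the map $\phi$ already recorded above. For the simplicity statement the same dictionary gives that $e$ is a loop iff $\theta(\{e\})=\widehat{0}_L=\theta(\emptyset)$ and that $e,f$ are parallel iff $\theta(\{e\})=\theta(\{f\})$; hence $\rho$ is simple precisely when $\theta$ is injective on $\irr(B_E)\cup\{\emptyset\}$.

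For the converse the natural candidate is the restriction $r=s|_L$, each flat being read as a subset of $E$. Because the induced map satisfies $\theta(X)=\bigvee_{e\in X}\overline{\{e\}}=\overline{X}$ and because $s(\overline{X})=s(X)$, we obtain $r\circ\theta=s$ immediately, along with $r(\widehat{0}_L)=s(\emptyset)=0$. Monotonicity and submodularity of $r$ then follow from the corresponding properties of $s$ via the identities $\ell_1\wedge\ell_2=\ell_1\cap\ell_2$, $\ell_1\vee\ell_2=\overline{\ell_1\cup\ell_2}$ and $s(\overline{Y})=s(Y)$. The only delicate point is strictness: if $\ell_1\subsetneq\ell_2$ were flats with $s(\ell_1)=s(\ell_2)$, then every $e\in\ell_2$ would satisfy $s(\ell_1\cup\{e\})=s(\ell_1)$, forcing $\ell_2\subseteq\overline{\ell_1}=\ell_1$, a contradiction, so $r$ is strictly order-preserving.

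I expect the flat-identification step of the forward direction to be the main obstacle, since it is the one place where the strictness of $r$ must be converted, through the equivalence above, into equalities inside $L$; this is exactly what forces the generator enriched lattice of flats to recover $(L,G)$ itself rather than some coarser quotient, and it is also the ingredient that makes the loop and parallel criteria fall out cleanly.
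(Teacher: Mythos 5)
Your proof is correct and follows essentially the same route as the paper: the same composition checks for the polymatroid axioms, strictness of $r$ converting rank equalities into lattice equalities to identify the flats with $\cl_\theta$ (you do this element\-wise via $e\in\overline{X}\iff\theta(\{e\})\le\theta(X)$, the paper via equality of fibers, a cosmetic difference), and the restriction $r=s|_L$ for the converse. Your strictness argument in the converse is in fact slightly more detailed than the paper's one-line assertion.
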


	\begin{proof}
		Let~$s$ be the composition~$r\circ\theta$. 
		By assumption~$s(\emptyset)=r(\widehat{0})=0$.
		The maps~$\theta$ and~$r$ are order-preserving,
		hence~$s$ must be as well. To show that~$s$ is submodular,
		let~$X$ and~$Y$ be subsets of~$E$. Since~$\theta$ is join-preserving
		we have~$s(X\cup Y)=r(\theta(X)\vee\theta(Y))$. On the other
		hand since~$\theta$ is order-preserving, the image~$\theta(X\cap Y)$
		is a lower bound for both~$\theta(X)$ and~$\theta(Y)$,
		hence~$\theta(X\cap Y)\le\theta(X)\wedge\theta(Y)$.
		Thus~$s(X\cap Y)\le r(\theta(X)\wedge\theta(Y))$.
		Summing these two values results in the inequality
		\[
		s(X\cap Y)+s(X\cup Y)\le r(\theta(X)\wedge\theta(Y))+r(\theta(X)
		\vee\theta(Y)).
		\]
		Applying the submodularity of the function~$r$ leads to the inequality
		\[
		s(X\cap Y)+s(X\cup Y)\le r(\theta(X))+r(\theta(Y))
		=s(X)+s(Y).
		\]
		Therefore the function~$s$ is a polymatroid.

		To show that the generator enriched lattice of flats of~$s$ is isomorphic to~$(L,G)$,
		it will suffice to show that the closure operator~$\cl_\theta$ is the
		closure operator of~$s$. The closure of two sets~$X$ and~$Y$
		with respect to~$s$ is the same if and only if~$s(X)=s(X\cup Y)=s(Y)$.
		Since~$r$ is strictly order-preserving, this holds if and only
		if~$\theta(X)=\theta(Y)$, which holds if and only if~$\cl_\theta(X)
		=\cl_\theta(Y)$.
		By the same argument we see that~$s$ has a loop or a nontrivial
		parallel class precisely when~$\theta|_{\irr(B_E)\cup\{\emptyset\}}$
		is not injective.

		To prove the converse, consider a polymatroid~$s:B_E\rightarrow\mathbb{R}_{\ge0}$
		with generator enriched lattice of flats~$(L,G)$.
		Let~$\theta:\mathscr{B}_E\rightarrow(L,G)$ be the strong map
		induced by the map~$e\mapsto\overline{\{e\}}$,
		and let~$r=s|_L$.
		If~$A\subsetneq B\in L$ are flats then~$s(A)<s(B)$
		so~$r$ is strictly order-preserving on~$L$.
		Since~$A\vee B=\overline{A\cup B}$ we have~$s(A\vee B)=s(A\cup B)$.
		Therefore we have that~$r(A\wedge B)+r(A\vee B)=s(A\cap B)+s(A\cup B)$,
		which by submodularity of~$s$ is less than or equal to~$s(A)+s(B)$.
		This of course equals~$r(A)+r(B)$ so the function~$r$ is submodular.
	\end{proof}

\begin{lemma}
\label{submodular funcs exist}
	For any lattice~$L$ there exists a strictly order preserving submodular
	function~$r:L\rightarrow\mathbb{Z}_{\ge0}$ with~$r(\widehat{0})=0$.
\end{lemma}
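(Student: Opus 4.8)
The plan is to write down an explicit integer-valued function on $L$ and verify the three defining conditions by hand. The obvious first candidate, the height function sending $\ell$ to the length of a longest $\widehat{0}$--$\ell$ chain, is strictly order preserving but is submodular only when $L$ is upper semimodular, so it cannot be used for an arbitrary lattice. Instead I would count elements of $L$ by their position relative to $\ell$: for $\ell\in L$ set
\[
r(\ell)=\lvert L\rvert-\lvert U(\ell)\rvert,\qquad U(\ell)=\{x\in L:x\ge\ell\},
\]
so that $r(\ell)$ is the number of elements of $L$ that do \emph{not} lie above $\ell$. This is manifestly a nonnegative integer, and $r(\widehat{0})=\lvert L\rvert-\lvert L\rvert=0$ since every element of $L$ lies above $\widehat{0}$.

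For strict monotonicity I would use that the assignment $\ell\mapsto U(\ell)$ reverses order. If $a<b$ then $U(b)\subseteq U(a)$, and the containment is proper because $a\in U(a)\setminus U(b)$. Hence $\lvert U(b)\rvert<\lvert U(a)\rvert$, which gives $r(a)<r(b)$, as needed.

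The real content is submodularity, and the idea is that passing to up-sets converts a trivial cardinality inequality into exactly the inequality we want. The two facts to record are the identity $U(x\vee y)=U(x)\cap U(y)$, valid in any lattice, together with the containment $U(x\wedge y)\supseteq U(x)\cup U(y)$, which holds because $x\wedge y\le x$ and $x\wedge y\le y$. Combined with inclusion--exclusion these give
\[
\lvert U(x\wedge y)\rvert+\lvert U(x\vee y)\rvert\ge\lvert U(x)\cup U(y)\rvert+\lvert U(x)\cap U(y)\rvert=\lvert U(x)\rvert+\lvert U(y)\rvert,
\]
so the map $\ell\mapsto\lvert U(\ell)\rvert$ is \emph{super}modular. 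Since $r$ equals the constant $\lvert L\rvert$ minus this map, subtracting flips supermodularity into the submodularity inequality $r(x\wedge y)+r(x\vee y)\le r(x)+r(y)$.

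I expect the main obstacle to be purely a matter of keeping the directions of the inequalities straight: the order-reversal built into $U$ is precisely what turns the automatic supermodularity of set cardinalities into submodularity of $r$, so the only real risk is sign-chasing in the final step rather than any genuine difficulty. A secondary point worth stating carefully is the strictness of $U(b)\subsetneq U(a)$, since that is what upgrades monotonicity to \emph{strict} monotonicity.
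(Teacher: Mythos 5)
Your proof is correct, but it takes a genuinely different route from the paper's. The paper also starts from chain lengths --- it takes the longest-chain height $k$ of each element but then passes it through the concave transformation $k\mapsto 1-2^{-k}$, which forces submodularity by a crude estimate ($r(x\wedge y)\le 1-2^{-n+1}$ and $r(x\vee y)\le 1$), and finally scales by a power of $2$ to land in $\mathbb{Z}_{\ge0}$. You instead set $r(\ell)=\lvert L\rvert-\lvert U(\ell)\rvert$ and derive submodularity from the exact identity $U(x\vee y)=U(x)\cap U(y)$ together with the containment $U(x\wedge y)\supseteq U(x)\cup U(y)$ and inclusion--exclusion; all three required properties then follow cleanly, and your verification of each (including the strictness of $U(b)\subsetneq U(a)$ via $a\in U(a)\setminus U(b)$) is sound. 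Your construction has the advantage of producing integer values directly, with no rational intermediate and no clearing of denominators, and the submodularity argument is an equality-plus-containment computation rather than an inequality estimate; your opening remark that the raw height function fails submodularity on non-semimodular lattices also correctly identifies the obstruction that the paper's $1-2^{-k}$ trick is designed to sidestep. The paper's version, for its part, keeps the function tied to the familiar height statistic, but neither approach is materially longer than the other.
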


	\begin{proof}
		It will suffice to construct such a function with values in~$\mathbb{Q}_{\ge0}$.
		Afterwards one can scale by a sufficiently large
		positive integer to clear denominators.
		Define a function~$r:L\rightarrow\mathbb{Q}_{\ge0}$
		by, for~$\ell\in L$ such that the largest
		chain in~$L$ from~$\widehat{0}$ to~$\ell$ is length~$k$,
		setting~$r(\ell)=1-2^{-k}$.
		The map~$r$ is strictly order-preserving
		and maps~$\widehat{0}$ to~$0$. To show~$r$ satisfies the submodularity
		condition, let~$x,y\in L$. It may be assumed that~$x\wedge y$
		is neither~$x$ nor~$y$, otherwise the submodularity inequality
		holds trivially for~$x$ and~$y$.
		Let~$r(x)=1-2^{-n}$ and~$r(y)=1-2^{-m}$.
		It may also be assumed that~$n\le m$.
		Observe that~$r(x\wedge y)\le 1-2^{-n+1}$ and~$r(x\vee y)\le 1$.
		Adding these terms gives, 
		\[
			r(x\wedge y)+r(x\vee y)\le 2-2^{-n+1}
			\le2-2^{-n}-2^{-m}
			=r(x)+r(y).
		\]
		Thus~$r$ is submodular and can be used to construct the desired function.
	\end{proof}

It is known that every lattice is isomorphic to the lattice of flats of some
polymatroid that is integer valued.
This result is attributed to Dilworth in \cite[pp. 26]{critProblems}
and follows from Dilworth's embedding theorem \cite[Theorem 14.1]{crawley-dilworth},
which states that any finite lattice can be embedded into a geometric lattice.
Below is a somewhat stronger result.

\begin{proposition}
	Every generator enriched lattice is isomorphic to the generator enriched lattice of flats of some polymatroid,
	which may be chosen to have integer values.
\end{proposition}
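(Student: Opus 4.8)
The plan is to assemble the two preceding results. Lemma~\ref{submodular funcs exist} supplies a strictly order-preserving, submodular, integer-valued function on the lattice~$L$, while Proposition~\ref{polymatroids from genlatts} turns a strong surjection onto~$(L,G)$, together with such a function, into a polymatroid whose generator enriched lattice of flats is isomorphic to~$(L,G)$. Thus the only genuinely new ingredient I need to produce is a strong surjection from some Boolean algebra onto the given generator enriched lattice~$(L,G)$.

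To construct such a surjection I would take the ground set to be~$G$ itself. Let~$f:G\rightarrow G\cup\{\widehat{0}_L\}$ be the inclusion map and let~$\theta:\mathscr{B}_G\rightarrow(L,G)$ be the strong map it induces, so that~$\theta(X)=\bigvee_{g\in X}g$ for~$X\subseteq G$. By construction~$\theta$ is join-preserving and carries the join irreducibles of~$B_G$ into~$G$, hence it is a strong map. Verifying that it is a strong \emph{surjection} reduces to checking~$\theta(\irr(B_G)\cup\{\emptyset\})=G\cup\{\widehat{0}_L\}$: each singleton~$\{g\}$ maps to~$g$ and~$\emptyset$ maps to the empty join~$\widehat{0}_L$, which gives the equality at once. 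Surjectivity onto all of~$L$ in the lattice sense holds precisely because~$G$ generates~$L$ under joins, which is part of the definition of a generator enriched lattice.

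With~$\theta$ in hand I would invoke Lemma~\ref{submodular funcs exist} to obtain a strictly order-preserving submodular function~$r:L\rightarrow\mathbb{Z}_{\ge0}$ with~$r(\widehat{0}_L)=0$, and then apply Proposition~\ref{polymatroids from genlatts} to conclude that~$r\circ\theta$ is a polymatroid whose generator enriched lattice of flats is isomorphic to~$(L,G)$. Since~$r$ takes values in~$\mathbb{Z}_{\ge0}$ and~$\theta$ maps into~$L$, the composite~$r\circ\theta$ is integer valued, which settles the final clause of the statement.

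There is essentially no hard step here: the proof is a direct combination of the two prior results once the ground set is chosen to be~$G$. The only point demanding a moment's care is confirming that the induced map~$\theta$ qualifies as a strong surjection in the precise sense defined earlier, and this is immediate from the generation hypothesis built into~$(L,G)$.
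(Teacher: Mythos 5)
Your proposal is correct and follows the paper's own argument exactly: take the strong surjection $\theta:\mathscr{B}_G\rightarrow(L,G)$ induced by the identity (inclusion) map on $G$, obtain an integer-valued strictly order-preserving submodular function from Lemma~\ref{submodular funcs exist}, and apply Proposition~\ref{polymatroids from genlatts} to the composite. The extra verification that $\theta$ is a strong surjection is a harmless elaboration of what the paper leaves implicit.
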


	\begin{proof}
		Let~$(L,G)$ be a generator enriched lattice.
		By Lemma~\ref{submodular funcs exist} there is an integer valued
		strictly order-preserving submodular function~$r$ on~$L$.
		Let~$\theta:\mathscr{B}_G\rightarrow(L,G)$ be the strong surjection
		induced by the identity map on~$G$.
		By Proposition~\ref{polymatroids from genlatts} the map~$r\circ\theta$
		is a polymatroid whose lattice of flats is isomorphic to~$(L,G)$.
	\end{proof}
	
\begin{theorem}
	Let~$E$ be a set. A function from~$B_E$ to~$B_E$ is the closure
	operator of a polymatroid if and only if it is the closure operator
	of a strong surjection~$\theta:\mathscr{B}_E\rightarrow(L,G)$
	onto some generator enriched lattice~$(L,G)$.
\end{theorem}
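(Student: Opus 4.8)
The plan is to read both implications off of Proposition~\ref{polymatroids from genlatts} and Lemma~\ref{submodular funcs exist}. The one fact I will lean on is really the content of the \emph{proof} of Proposition~\ref{polymatroids from genlatts} rather than of its statement: whenever $s=r\circ\theta$ for a strong surjection $\theta\colon\mathscr{B}_E\to(L,G)$ and a strictly order-preserving submodular $r\colon L\to\mathbb{R}_{\ge0}$ with $r(\widehat{0}_L)=0$, the closure operator of the polymatroid $s$ equals $\cl_\theta$ as a function $B_E\to B_E$, and not merely up to isomorphism. The proof of that proposition shows that $X$ and $Y$ have the same $s$-closure exactly when $\theta(X)=\theta(Y)$, i.e.\ exactly when $\cl_\theta(X)=\cl_\theta(Y)$; since a closure operator on $B_E$ is determined by the equivalence relation ``same closure'' (the closure of $X$ being the greatest element of its block, equivalently the union of the block's members, which is exactly the map $\phi$ used to build $\cl_\theta$), this equality of partitions upgrades to the equality $\cl_s=\cl_\theta$.

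For the forward implication, suppose a function $c\colon B_E\to B_E$ is the closure operator of a polymatroid $s$. By the converse half of Proposition~\ref{polymatroids from genlatts}, $s=r\circ\theta$ where $\theta$ is the strong surjection induced by $e\mapsto\overline{\{e\}}$ onto the generator enriched lattice of flats $(L,G)$ of $s$, and $r=s|_L$ is strictly order-preserving and submodular with $r(\widehat{0}_L)=0$. By the fact recorded above, $c=\cl_s=\cl_\theta$, so $c$ is the closure operator of the strong surjection $\theta$, as required.

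For the reverse implication, suppose $c=\cl_\theta$ for some strong surjection $\theta\colon\mathscr{B}_E\to(L,G)$. Lemma~\ref{submodular funcs exist} furnishes a strictly order-preserving submodular $r\colon L\to\mathbb{Z}_{\ge0}$ with $r(\widehat{0}_L)=0$. The forward half of Proposition~\ref{polymatroids from genlatts} then makes $s=r\circ\theta$ a polymatroid, and again by the recorded fact its closure operator is $\cl_\theta=c$. Hence $c$ is a polymatroid closure operator, completing the equivalence.

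The routine verifications (submodularity of $r\circ\theta$, and strictness and submodularity of $s|_L$) are already carried out in the two halves of Proposition~\ref{polymatroids from genlatts}, so nothing new is needed there, and the existence of a suitable $r$ in the reverse direction is exactly Lemma~\ref{submodular funcs exist}. The only point requiring care, and the one I expect to be the crux, is the sharpening from ``isomorphic generator enriched lattices of flats'' to ``identical closure operators on $B_E$'': I must check that the two induced partitions of $B_E$ coincide on the nose and then invoke the characterization of a closure operator by its blocks. Everything else is bookkeeping matching the hypotheses of the proposition to the objects produced by the lemma.
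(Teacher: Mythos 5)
Your proof is correct and takes essentially the same route as the paper: both directions rest on Proposition~\ref{polymatroids from genlatts} together with Lemma~\ref{submodular funcs exist}, and your key observation---that $\cl_s$ and $\cl_\theta$ each send $X$ to the union of its block under the common equivalence relation, so agreement of the partitions upgrades to equality of the operators---is exactly the step the paper leaves as ``evident'' in its converse direction. The only cosmetic difference is in the forward direction, where the paper computes $\theta(X)=\overline{X}$ directly as a join of singleton closures rather than routing through the converse half of the proposition as you do; both are valid.
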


	\begin{proof}
		Let~$r:B_E\rightarrow\mathbb{R}_{\ge0}$ be a polymatroid,
		and let~$(L,G)$ be the generator enriched lattice of flats of~$r$.
		Let~$\theta:\mathscr{B}_E\rightarrow(L,G)$
		be the strong map induced by the map~$e\mapsto\overline{\{e\}}$
		from~$E$ to~$L$.
		The image~$\theta(X)$ is by definition
		\[\theta(X)=\bigvee_{x\in X}\overline{\{x\}},\]
		in other words the smallest flat including~$\overline{\{x\}}$
		for all~$x\in X$. If~$Y$ is a flat including~$\overline{\{x\}}$
		for all~$x\in X$, then~$Y\supseteq X$. Taking the closure~$Y
		\supseteq\overline{X}$. Thus the image~$\theta(X)$ equals the
		closure~$\overline{X}$. Since~$L\subseteq B_E$ the closure
		operator~$\cl_\theta$ takes the same values as~$\theta$
		so we have shown that~$\overline{\ \cdot\ }=\cl_\theta$.

		\begin{sloppypar}
		Conversely consider a generator enriched lattice~$(L,G)$ and a strong
		surjection~$\theta:\mathscr{B}_E\rightarrow(L,G)$.
		We wish to construct a polymatroid whose closure operator
		coincides with the closure operator~$\cl_\theta$ of~$\theta$.
		By~Lemma~\ref{submodular funcs exist} there is
		a strictly order-preserving submodular function~$r:
		L\rightarrow\mathbb{R}_{\ge0}$. By~Proposition~\ref{polymatroids from genlatts}
		the function~$s=r\circ\theta:B_E\rightarrow\mathbb{R}_{\ge0}$
		is a polymatroid on~$E$. Furthermore the generator enriched lattice
		of flats of~$s$ is isomorphic to~$(L,G)$ via the isomorphism~$\overline{X}
		\mapsto\theta(X)$. From this it is evident that~$\overline{\ \cdot\ }
		=\cl_\theta$, hence the closure operator~$\cl_\theta$ of~$\theta$
		is the closure operator of the polymatroid~$s$.\qedhere
		\end{sloppypar}
	\end{proof}

\section{Minors}
\label{lattice minors section}
	In this section we discuss minors of polymatroids in regards to the associated
closure operators and generator enriched lattices. The underlying generator enriched lattice
of a minor does not fully depend on the original polymatroid if it is simple,
only the underlying
generator enriched lattice. In Section~\ref{genlatt minors subsection} we discuss minors of generator enriched lattices themselves
(with no structure of a strong surjection). In Theorem~\ref{graph minors} we show
that for a graphic matroid the minors of the generator enriched lattice of flats are in bijection
with the minors of the graph when the vertices are labeled and the edges are unlabeled.
In Theorem~\ref{parallel closed pairs and lattice minors} we prove a generalization of this
result to polymatroids.

Let~$r$ be a polymatroid with ground set~$E$. The \emph{deletion by~$X\subseteq E$}
is the polymatroid~$r\setminus X:B_{E\setminus X}\rightarrow\mathbb{R}_{\ge0}$
defined as the usual function restriction~$r\setminus X=r|_{B_{E\setminus X}}$.
The \emph{contraction by~$X$} is the polymatroid~$r/X:B_{E\setminus X}\rightarrow\mathbb{R}_{\ge0}$
defined for~$Y\subseteq E\setminus X$ by setting~$(r/X)(Y)=r(Y\cup X)-r(X)$.
These operations correspond to restricting to a lower and upper interval of
the Boolean algebra~$B_E$ respectively. Any polymatroid obtained from~$r$
via deletion and contraction operations is said to be a \emph{minor} of~$r$.

\subsection{Minors of strong surjections}

We begin by observing that minors of a polymatroid closure operator are well defined
as the closure operator of the corresponding minor of any associated polymatroid.

\begin{lemma}
	Let~$(r,E)$ and~$(s,E)$ be two polymatroids with the same closure operator.
	For any two disjoint sets~$X,Y\subseteq E$ the closure operators
	of the minors~$(r/X)\setminus Y$ and~$(s/X)\setminus Y$ are the same.
\end{lemma}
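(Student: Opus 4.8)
The plan is to show that the closure operator of the minor $(r/X)\setminus Y$ is completely determined by the closure operator of $r$ together with the fixed sets $X$, $Y$, and $E$; once this is established, the hypothesis that $r$ and $s$ share a closure operator transfers immediately to their minors.

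First I would unwind the two operations. The minor $(r/X)\setminus Y$ is a polymatroid on the ground set $E\setminus(X\cup Y)$, and since deletion is plain restriction while contraction subtracts the constant $r(X)$, its value on a subset $Z\subseteq E\setminus(X\cup Y)$ is $r(Z\cup X)-r(X)$. The central computation is then the closure operator of this minor. For $Z$ and an element $e$, both in $E\setminus(X\cup Y)$, the element $e$ lies in the closure of $Z$ in the minor exactly when the minor assigns the same value to $Z$ and to $Z\cup\{e\}$. Here the additive constant $r(X)$ cancels, so this condition is equivalent to $r(Z\cup X)=r((Z\cup X)\cup\{e\})$, that is, to $e$ belonging to the closure $\overline{Z\cup X}$ computed with respect to $r$. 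Therefore the closure of $Z$ in $(r/X)\setminus Y$ equals $\overline{Z\cup X}\cap(E\setminus(X\cup Y))$.

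With this formula in hand the conclusion is immediate. The right-hand side involves only the closure operator of $r$, evaluated at the set $Z\cup X$, intersected with the fixed set $E\setminus(X\cup Y)$; the particular rank values of $r$ play no role. Since $r$ and $s$ are assumed to have identical closure operators, $\overline{Z\cup X}$ is the same whether computed in $r$ or in $s$, and intersecting with the common set $E\setminus(X\cup Y)$ produces identical closures for $(r/X)\setminus Y$ and $(s/X)\setminus Y$. There is essentially no obstacle to overcome; the one point meriting care is the cancellation of the additive constant $r(X)$ in the contraction step, since it is precisely this cancellation that renders the minor's closure operator insensitive to the actual rank values and dependent only on the closure data.
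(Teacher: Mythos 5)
Your proof is correct and follows essentially the same route as the paper: unwind the minor as $Z\mapsto r(Z\cup X)-r(X)$ on $E\setminus(X\cup Y)$, observe that the additive constant cancels so that membership of $e$ in the minor's closure of $Z$ reduces to $r(Z\cup X)=r(Z\cup X\cup\{e\})$, and conclude from the equality of the closure operators of $r$ and $s$. Your version is slightly more explicit in recording the resulting formula $\overline{Z\cup X}\cap(E\setminus(X\cup Y))$ for the minor's closure, but the argument is the same.
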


	\begin{proof}
		Let~$r'=(r/X)\setminus Y$ and~$s'=(s/X)\setminus Y$.
		Let~$Z\subseteq E$ and~$e\in E\setminus Z$. By assumption~$r(Z)=r(Z\cup\{e\})$
		if and only if~$s(Z)=s(Z\cup\{e\})$. The minor~$r'$
		is the function defined on~$E\setminus(X\cup Y)$
		by~$r'(Z)=r(Z)-r(X)$, and similarly for~$s$ and~$s'$.
		Thus we have that~$r'(Z)=r'(Z\cup\{e\})$ if and only
		if~$s'(Z)=s'(Z\cup\{e\})$ which shows that the closure operators
		of~$r'$ and~$s'$ are the same.
	\end{proof}

We now turn to defining deletion and contraction operations on strong surjections
from a lattice theoretic viewpoint.
In Proposition~\ref{genlatt and polymatroid minors} we prove that these operations
agree with the same operations on polymatroids. First we set up some notation.

Given a lattice~$L$, let~$H\subseteq L$ and let~$z\in L$ be an element
such that~$z<h$ for all~$h\in H$. Define the \emph{generator enriched lattice with generating
set~$H$ and minimal element~$z$} to be
\begin{align*}\langle H|z\rangle&=\left(\{z\vee\bigvee_{x\in X}x:X\subseteq H\},H\right)\\
&=\left(\{z\}\cup\{\bigvee_{x\in X}x:\emptyset\ne X\subseteq H\},H\right).
\end{align*}
Usually when listing~$H$ explicitly the set brackets will be repressed.

Let~$E$ be a ground set, let~$(L,G)$ be a generator enriched lattice and let~$\theta:\mathscr{B}_E\rightarrow(L,G)$
be a strong surjection. For~$X\subseteq E$ define the \emph{deletion of~$\theta$ by~$X$}
to be the strong surjection
\[
\theta\setminus X:\mathscr{B}_{E\setminus X}\rightarrow
\langle\{\theta(\{e\}):e\in E\setminus X\}\setminus\{\widehat{0}_L\}|\widehat{0}_L\rangle,
\]
defined for~$Z\subseteq E\setminus X$
by~$(\theta\setminus X)(Z)=\theta(Z)$.
Define the \emph{contraction of~$\theta$ by~$X$} to be the strong surjection
\[
\theta/X:\mathscr{B}_{E\setminus X}\rightarrow
\langle\{\theta(X\cup\{e\}):e\in E\setminus X\}\setminus\{\theta(X)\}|\theta(X)\rangle,
\]
defined
for~$Z\subseteq E\setminus X$ by~$(\theta/X)(Z)=\theta(X\cup Z)$.

Conceptually the deletion by~$X$ of a strong surjection~$\theta$
is obtained by restricting~$\theta$ to the lower interval~$[\emptyset,E\setminus X]\subseteq B_E$,
and then restricting the codomain to ensure the resulting function is a surjection.
Similarly contracting by~$X$ corresponds to restricting to the upper
interval~$[X,E]\subseteq B_E$.

\begin{proposition}
\label{genlatt and polymatroid minors}
	Let~$r:E\rightarrow\mathbb{R}_{\ge0}$ be a polymatroid with generator enriched lattice of
	flats~$(L,G)$, and let~$X,Y\subseteq E$
	be disjoint sets. If~$\theta:\mathscr{B}_E\rightarrow(L,G)$ is the strong surjection
	associated to~$r$ then the closure operator of the polymatroid~$(r/X)\setminus Y$
	is equal to~$\cl_{(\theta/X)\setminus Y}$.
\end{proposition}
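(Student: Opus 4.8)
The plan is to compute both closure operators explicitly from the definitions and observe that they are given by the same formula in terms of the $r$-closure $\overline{\ \cdot\ }$. The bridge between the lattice side and the rank side is the identity $\theta(A)=\overline{A}$ for all $A\subseteq E$, which was established in the proof of the preceding theorem for the strong surjection $\theta$ induced by $e\mapsto\overline{\{e\}}$. Throughout, write $r'=(r/X)\setminus Y$ and $\psi=(\theta/X)\setminus Y$; both have domain $B_{E\setminus(X\cup Y)}$ since $X$ and $Y$ are disjoint.

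First I would treat the polymatroid side. The minor definitions give $r'(Z)=r(X\cup Z)-r(X)$ for every $Z\subseteq E\setminus(X\cup Y)$. Hence, for $e\in E\setminus(X\cup Y)$, the equality $r'(Z)=r'(Z\cup\{e\})$ holds if and only if $r(X\cup Z)=r(X\cup Z\cup\{e\})$, that is, if and only if $e\in\overline{X\cup Z}$. Consequently the closure of $Z$ with respect to $r'$ is exactly $\overline{X\cup Z}\cap(E\setminus(X\cup Y))$.

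Next I would treat the strong surjection side. Unwinding the two minor constructions, $\psi$ acts on $Z\subseteq E\setminus(X\cup Y)$ by $\psi(Z)=(\theta/X)(Z)=\theta(X\cup Z)=\overline{X\cup Z}$. The associated closure operator is $\cl_\psi=\phi_\psi\circ\psi$ with $\phi_\psi(\ell)=\bigcup_{W\in\psi^{-1}(\ell)}W$, where $W$ ranges over subsets of $E\setminus(X\cup Y)$. Since $\psi(W)=\psi(Z)$ holds precisely when $\overline{X\cup W}=\overline{X\cup Z}$, the set $\cl_\psi(Z)$ is the union of all $W\subseteq E\setminus(X\cup Y)$ satisfying $\overline{X\cup W}=\overline{X\cup Z}$. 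I would then show this union equals $\overline{X\cup Z}\cap(E\setminus(X\cup Y))$, matching the formula from the polymatroid side, which finishes the proof.

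The forward inclusion is immediate: any such $W$ satisfies $W\subseteq X\cup W\subseteq\overline{X\cup W}=\overline{X\cup Z}$, so $W\subseteq\overline{X\cup Z}\cap(E\setminus(X\cup Y))$. The main obstacle, and the only step needing care, is the reverse inclusion: I must verify that the candidate set $W_0=\overline{X\cup Z}\cap(E\setminus(X\cup Y))$ itself satisfies $\overline{X\cup W_0}=\overline{X\cup Z}$, so that $W_0$ genuinely occurs in the union. Here $X\cup W_0\subseteq\overline{X\cup Z}$ gives $\overline{X\cup W_0}\subseteq\overline{X\cup Z}$ by monotonicity and idempotency of the closure; conversely $Z\subseteq W_0$ (because $Z\subseteq\overline{X\cup Z}$ and $Z\subseteq E\setminus(X\cup Y)$) together with $X\subseteq\overline{X\cup Z}$ yields $X\cup Z\subseteq X\cup W_0$, hence $\overline{X\cup Z}\subseteq\overline{X\cup W_0}$. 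Combining the two containments gives the claimed equality, and both closure operators are seen to coincide.
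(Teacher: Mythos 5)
Your proof is correct. The paper's own argument rests on the same initial reduction---namely that $r'(Z)=r(X\cup Z)-r(X)$ and $\theta'(Z)=\theta(X\cup Z)$---but then proceeds at the level of equivalence relations: it shows that $\overline{Z_1}=\overline{Z_2}$ (closure with respect to $r'$) holds if and only if $\theta'(Z_1)=\theta'(Z_2)$, and concludes from the agreement of the fibers that the closure operators agree, implicitly using that each of the two closure operators sends a set to the unique maximal set in its fiber. You instead compute both operators in closed form as $Z\mapsto\overline{X\cup Z}\cap(E\setminus(X\cup Y))$ and check the formulas coincide. What your route buys is that the step the paper leaves implicit---that the union over a fiber of $\psi$ again lies in that fiber, which is exactly your $W_0$ verification---is carried out in full; the small price is that you invoke monotonicity and idempotency of the polymatroid closure operator, standard facts that follow from submodularity but are not isolated as lemmas in the paper. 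Both arguments are sound and of comparable length.
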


	\begin{proof}
		Set~$r'=(r/X)\setminus Y$ and~$\theta'=(\theta/X)\setminus Y$.
		Let~$\overline{\ \cdot\ }$ denote the closure operator of~$r'$.
		By definition~$\overline{Z_1}=\overline{Z_2}$ if and only
		if~$r'(Z_1)=r'(Z_1\cup Z_2)=r'(Z_2)$ which occurs if and only
		if~$r(Z_1\cup X)=r(Z_1\cup Z_2\cup X)=r(Z_2\cup X)$.
		Since~$\cl_\theta$ is the closure operator of~$r$ this occurs if
		and only if~$\theta(Z_1\cup X)=\theta(Z_1\cup Z_2\cup X)=\theta(Z_2\cup X)$.
		This is in turn equivalent to the condition~$\theta'(Z_1)=\theta'(Z_1\cup Z_2)
		=\theta'(Z_2)$.
		Therefore~$\overline{Z_1}=\overline{Z_2}$ if and only
		if~$\theta'(Z_1)=\theta'(Z_2)$, and thus~$\cl_{\theta'}$
		is the closure operator of~$r'$.
	\end{proof}

\subsection{Minors of generator enriched lattices}
\label{genlatt minors subsection}

Let~$(L,G)$ be a generator enriched lattice and~$\theta:\mathscr{B}_E\rightarrow(L,G)$ be a strong surjection.
When~$\theta$ is simple, that is, when~$\theta|_{\irr(B_E)\cup\{\emptyset\}}$
is injective, the codomain of the deletion~$\theta\setminus X$ depends only
on the set~$\{\theta(\{x\}):x\in X\}$. Similarly the codomain of
the contraction~$\theta/X$ depends only on the image~$\theta(X)$.
Thus viewing generator enriched lattices as encoding closure operators of simple polymatroids
we have a notion of deletion and contraction operations, the result of which
is another generator enriched lattice.

Let~$(L,G)$ be a generator enriched lattice and let~$I\subseteq G$. The \emph{deletion of~$(L,G)$ by~$I$}
is the generator enriched lattice
\[
(L,G)\setminus I=\langle G\setminus I|\widehat{0}_L\rangle.
\]
Let~$i_0=\bigvee_{i\in I}i$ and set~$J=\{g\vee i_0:g\in G\}\setminus\{i_0\}$.
The \emph{contraction of~$(L,G)$ by~$I$} is the generator enriched lattice
\[
(L,G)/I=\langle J|i_0\rangle.
\]
For convenience we also define the \emph{restriction of~$(L,G)$ to~$I$} as
\[
(L,G)|_I=(L,G)\setminus(G\setminus I).
\]
The operations of deletion and contraction on generator enriched lattices correspond to
first choosing a simple strong surjection, performing the operations as previously
defined for strong surjections, and taking a simplification of the result.

It will be convenient at times to index deletions and contractions
by subsets of some ground set~$E$,
or by elements of~$L$ instead. To define the former choose
a labeling of~$G$ by~$E$ so that~$G=\{g_e:e\in E\}$.
Given~$X\subseteq E$ the
deletion and contraction by~$X$ are defined as
\begin{align*}
(L,G)\setminus X&=(L,G)\setminus\{g_x:x\in X\},\\
(L,G)/X&=(L,G)/\{g_x:x\in X\}.
\end{align*}
Given~$\ell\in L$ the deletion and contraction by~$\ell$ are defined as
\begin{align*}
(L,G)\setminus \ell&=(L,G)\setminus\{g\in G:g\le\ell\},\\
(L,G)/\ell&=(L,G)/\{g\in G:g\le\ell\}.
\end{align*}

The result of any sequence of deletions and contractions applied to~$(L,G)$
is called a \emph{minor} of~$(L,G)$.

A few basic remarks for minors of a generator enriched lattice~$(L,G)$ are in order.

\begin{remark}
\label{first remark}
	By definition the underlying lattice of a minor of~$(L,G)$
	is a join subsemilattice of~$L$.
	In general the underlying lattice of a minor of~$(L,G)$
	may not be a sublattice of~$L$. For example,
	consider the partition lattice~$\Pi_4$ with minimal generating set 
	\[\irr(\Pi_4)=\{12/3/4,\ 13/2/4,\ 14/2/3,\ 1/23/4,\ 1/24/3,\ 1/2/34\}.\]
	Deleting the atom~$13/2/4$ results in a minor which is not a sublattice
	of~$\Pi_4$;
	in said minor the meet of~$123/4$ and~$134/2$ is the minimal partition~$1/2/3/4$
	as opposed to~$13/2/4$ when computed in~$L$.
\end{remark}

\begin{remark}
	Any interval of~$L$ is the underlying lattice of a minor of~$(L,G)$.
	If~$a\le b$ in~$L$
	then the minor~$((L,G)/a)|_b$ has underlying lattice the interval~$[a,b]$ of~$L$.
	The example given in Remark~\ref{first remark}
	shows the converse is false, that in general not all minors
	of~$(L,G)$ have as underlying lattice an interval of~$L$.
\end{remark}

\begin{remark}
	The deletion and
	contraction operations of generator enriched lattices do not in general commute.
	See Figure~\ref{del and contr not commuting figure} for an example.
\end{remark}

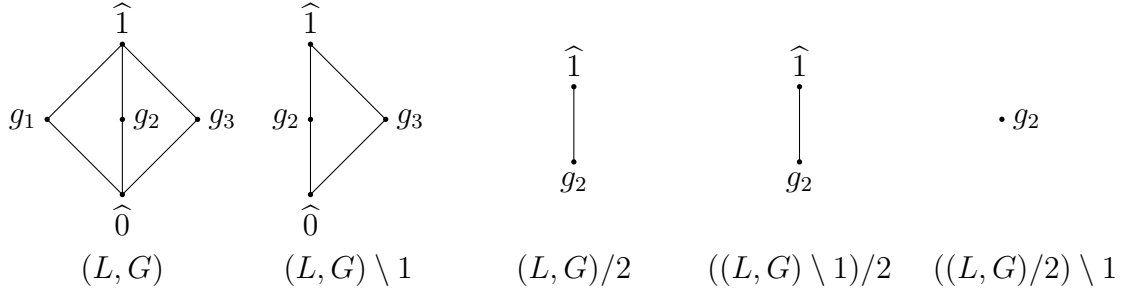
\begin{figure}
\centering
\begin{tikzpicture}
	\node at(0,0)
			{
			\begin{tikzpicture}\begin{scope}
				\fill(0,0)circle(1pt)coordinate(0);
				\fill(-1,1)circle(1pt)coordinate(g);
				\fill(0,1)circle(1pt)coordinate(h);
				\fill(1,1)circle(1pt)coordinate(i);
				\fill(0,2)circle(1pt)coordinate(1);
				\node[below]at(0){$\widehat{0}$};
				\node[left]at(g){$g_1$};
				\node[right]at(h){$g_2$};
				\node[right]at(i){$g_3$};
				\node[above]at(1){$\widehat{1}$};
				\draw(0)--(g)--(1)--(h)--(0)--(i)--(1);
			\end{scope}\end{tikzpicture}
			};
		\node at(0,-2){$(L,G)$};
		\node at(3,0)
			{
			\begin{tikzpicture}\begin{scope}
				\fill(0,0)circle(1pt)coordinate(0);
				\fill(0,1)circle(1pt)coordinate(h);
				\fill(1,1)circle(1pt)coordinate(i);
				\fill(0,2)circle(1pt)coordinate(1);
				\node[below]at(0){$\widehat{0}$};
				\node[left]at(h){$g_2$};
				\node[right]at(i){$g_3$};
				\node[above]at(1){$\widehat{1}$};
				\draw(0)--(h)--(1)--(i)--(0);
			\end{scope}\end{tikzpicture}
			};
		\node at(3,-2){$(L,G)\setminus 1$};
		\node at(6,0)
			{
			\begin{tikzpicture}\begin{scope}
				\fill(0,0)circle(1pt)coordinate(h);
				\fill(0,1)circle(1pt)coordinate(1);
				\node[below]at(h){$g_2$};
				\node[above]at(1){$\widehat{1}$};
				\draw(h)--(1);
			\end{scope}\end{tikzpicture}
			};
		\node at(6,-2){$(L,G)/2$};
		\node at(9,0)
			{
			\begin{tikzpicture}\begin{scope}
				\fill(0,0)circle(1pt)coordinate(h);
				\fill(0,1)circle(1pt)coordinate(1);
				\node[below]at(h){$g_2$};
				\node[above]at(1){$\widehat{1}$};
				\draw(h)--(1);
			\end{scope}\end{tikzpicture}
			};
		\node at(9,-2){$((L,G)\setminus 1)/2$};
		\node at(12,0)
			{
			\begin{tikzpicture}\begin{scope}
				\fill(0,0)circle(1pt)coordinate(h);
				\node[right]at(h){$g_2$};
			\end{scope}\end{tikzpicture}
			};
		\node at(12,-2){$((L,G)/2)\setminus 1$};

\end{tikzpicture}
\caption{A generator enriched lattice~$(L,G)$, where~$G=\{g_1,g_2,g_3\}$ for which deletions and contractions
do not commute, along with the relevant minors.}
\label{del and contr not commuting figure}
\end{figure}

The following observation will be useful.
\begin{lemma}
\label{contr then del}
Any minor of a generator enriched lattice~$(L,G)$ may be expressed as the result of a
contraction followed by a deletion. Namely, a minor~$(K,H)$ of~$(L,G)$ may be expressed
as~$(K,H)=((L,G)/\widehat{0}_K)|_{H}$.
\end{lemma}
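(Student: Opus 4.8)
The plan is to induct on the number of deletion and contraction operations used to produce the minor $(K,H)$ from $(L,G)$, first extracting a structural description of an arbitrary minor and then verifying the claimed identity by a direct computation. I would begin with the easy observation that every minor is literally presented in the form $\langle H|z\rangle$ for some $H\subseteq L$ and some $z\in L$, and that $z=\widehat{0}_K$: both the deletion and the contraction operations output generator enriched lattices written in exactly this shape, and the minimal element of $\langle H|z\rangle$ is $z$ because $z<h$ for every $h\in H$.

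The heart of the argument is the inductive claim that any minor $\langle H|z\rangle$ of $(L,G)$ satisfies (i) $z$ is a join of elements of $G$, and (ii) every generator $h\in H$ has the form $h=g\vee z$ for some $g\in G$. These hold for the base case $\langle G|\widehat{0}_L\rangle$, since each $g=g\vee\widehat{0}_L$ and $\widehat{0}_L=\bigvee\emptyset$. Deletion only shrinks $H$ and leaves $z$ fixed, so it preserves (i) and (ii). For a contraction by a subset $I$ of the current generating set, the new minimal element is $z'=\bigvee_{i\in I}i$; rewriting each $i=g_i\vee z$ by (ii) gives $z'=z\vee\bigvee_i g_i$, again a join of elements of $G$, while the new generators $\{h\vee z':h\in H\}\setminus\{z'\}$ collapse via $h\vee z'=(g\vee z)\vee z'=g\vee z'$, using $z\le z'$. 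This collapse is the key computation, and carrying the bookkeeping through iterated contractions cleanly is the step I expect to be the main obstacle. (Property (i) is in fact available for free: by the earlier remark the underlying lattice of a minor is a join subsemilattice of $L$, so $z\in L$, and since $G$ generates $L$ every element of $L$ is a join of elements of $G$.)

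Finally I would compute $((L,G)/\widehat{0}_K)|_H$ directly. Contracting by the lattice element $z=\widehat{0}_K$ means contracting by $I=\{g\in G:g\le z\}$, whose join is $i_0=\bigvee\{g\in G:g\le z\}$; property (i) forces $i_0=z$, since $z$ is the join of a subset of $\{g\in G:g\le z\}$ while each such $g$ lies below $z$. Hence $(L,G)/\widehat{0}_K=\langle J|z\rangle$ with $J=\{g\vee z:g\in G\}\setminus\{z\}$. Property (ii), together with $h>z$ for each $h\in H$, yields $H\subseteq J$, so the restriction gives $((L,G)/\widehat{0}_K)|_H=\langle J|z\rangle\setminus(J\setminus H)=\langle J\cap H|z\rangle=\langle H|z\rangle=(K,H)$. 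Since a restriction is by definition a deletion, this exhibits $(K,H)$ as a contraction followed by a deletion, completing the proof.
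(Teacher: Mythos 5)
Your proof is correct and follows essentially the same route as the paper: both arguments rest on showing that the minimal element of any minor is a join of elements of $G$ and that every generator of the minor has the form $g\vee\widehat{0}_K$ for some $g\in G$, and then reading off the identity $(K,H)=((L,G)/\widehat{0}_K)|_H$. The only difference is presentational --- you organize these two facts as an explicit induction on the number of deletion/contraction operations, whereas the paper asserts them directly for an arbitrary sequence $((\cdots((L,G)/I_1)\setminus J_1\cdots)/I_r)\setminus J_r$.
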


	\begin{proof}
		Let~$(K,H)$ be a minor of~$(L,G)$.
		By definition~$(K,H)$ may be expressed as the result of a sequence
		of contractions and deletions. That is, for some possibly
		empty sets of generators~$I_1,J_1,
		\dots,I_r,J_r$, that
		\[(K,H)=((\cdots(((L,G)/I_1)\setminus J_1)\cdots/I_r)\setminus J_r.\]
		For~$1\le j\le r$ let~$i_j$ be the join of all elements
		in~$I_j$. Set~$i_0=i_1\vee\cdots\vee i_r$.
		By definition of deletion and contraction, the minimal element~$\widehat{0}_K$
		of~$K$ is~$i_0$. Furthermore, the generators of~$(K,H)$ can each
		be expressed as~$g\vee i_1\vee\cdots\vee i_r=g\vee i_0$
		for some~$g\in G$. Thus each generator of~$(K,H)$ is
		a generator of~$(L,G)/i_0$, hence~$(K,H)=((L,G)/i_0)|_{H}$.
	\end{proof}

The lemma below gives an explicit description of the generating sets of minors.

\begin{lemma}
\label{minor gen sets}
For any generator enriched lattice~$(L,G)$ the minors are precisely generator enriched lattices of the
form~$\langle\ell\vee g_1,\dots,\ell\vee g_k|\ell\rangle$
for~$\ell\in L$ and~$\{g_1,\ldots,g_k\}\subseteq G$ such that~$g_j\not\le\ell$
for~$1\le j\le k$.
\end{lemma}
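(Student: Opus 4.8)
The plan is to prove both inclusions by leaning on Lemma~\ref{contr then del}, together with the observation that the generating set~$G$ lets one recover any element of~$L$ as a join of generators lying below it. The key auxiliary fact is that for every~$\ell\in L$ we have~$\ell=\bigvee\{g\in G:g\le\ell\}$: since~$G$ generates~$L$ under joins we may write~$\ell=\bigvee_{g\in S}g$ for some~$S\subseteq G$, and every~$g\in S$ satisfies~$g\le\ell$, so~$S\subseteq\{g\in G:g\le\ell\}$, forcing the two joins to coincide. Consequently, contracting by the element~$\ell$ uses the set~$I=\{g\in G:g\le\ell\}$, whose join is~$i_0=\ell$, so by definition~$(L,G)/\ell=\langle\{g\vee\ell:g\in G\}\setminus\{\ell\}\,|\,\ell\rangle=\langle\{g\vee\ell:g\in G,\ g\not\le\ell\}\,|\,\ell\rangle$. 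This is a generator enriched lattice with minimal element~$\ell$ whose generating set is~$\{g\vee\ell:g\in G,\ g\not\le\ell\}$.

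For the direction that every minor has the claimed form, I would take an arbitrary minor~$(K,H)$ and apply Lemma~\ref{contr then del} to write~$(K,H)=((L,G)/\widehat{0}_K)|_H$. Setting~$\ell=\widehat{0}_K$, the previous paragraph identifies~$(L,G)/\ell$ explicitly; since the restriction~$|_H$ of a generator enriched lattice with generating set~$J$ has generating set~$J\cap H$, the equality~$(K,H)=((L,G)/\ell)|_H$ forces~$H$ to be a subset of the generating set of~$(L,G)/\ell$. Hence each~$h\in H$ equals~$g_h\vee\ell$ for some~$g_h\in G$ with~$g_h\not\le\ell$. Writing~$H=\{g_1\vee\ell,\dots,g_k\vee\ell\}$ and unwinding~$((L,G)/\ell)|_H=\langle H\,|\,\ell\rangle$ then exhibits~$(K,H)=\langle\ell\vee g_1,\dots,\ell\vee g_k\,|\,\ell\rangle$ in exactly the required form.

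For the converse, given~$\ell\in L$ and~$g_1,\dots,g_k\in G$ with~$g_j\not\le\ell$, I would realize~$\langle\ell\vee g_1,\dots,\ell\vee g_k\,|\,\ell\rangle$ as the explicit minor~$((L,G)/\ell)|_{\{\ell\vee g_1,\dots,\ell\vee g_k\}}$: each~$\ell\vee g_j$ lies in the generating set~$\{g\vee\ell:g\in G,\ g\not\le\ell\}$ of~$(L,G)/\ell$ computed above, so restricting to this subset is a legitimate deletion and returns precisely~$\langle\ell\vee g_1,\dots,\ell\vee g_k\,|\,\ell\rangle$, while~$g_j\not\le\ell$ also guarantees~$\ell<\ell\vee g_j$ so that the bracket notation is well defined. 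I expect the one point demanding care to be the auxiliary identity~$i_0=\ell$ for contraction by an element of~$L$; without the fact that~$G$ generates~$L$, the minimal element produced by a contraction need not coincide with the indexing element, and everything else is then a matter of unwinding the definitions of~$\langle\,\cdot\,|\,\cdot\,\rangle$, deletion, and restriction.
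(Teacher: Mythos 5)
Your proof is correct and follows essentially the same route as the paper: both directions rest on writing a minor as a contraction followed by a restriction (Lemma~\ref{contr then del}) and on identifying the generating set of~$(L,G)/\ell$ as~$\{g\vee\ell:g\in G,\ g\not\le\ell\}$. Your explicit verification that~$\ell=\bigvee\{g\in G:g\le\ell\}$, so that contraction by~$\ell$ really has minimal element~$\ell$, is a point the paper leaves implicit, but otherwise the arguments coincide.
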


	\begin{proof}
		Consider a minor~$(K,H)=((L,G)/I)|_J$ of~$(L,G)$,
		where~$I$ and~$J$ are sets of generators.
		Let~$\ell$ be the join of all elements of~$I$
		and let~$J=\{j_1,\dots,j_k\}$.
		By definition
		\[(K,H)=\langle\ell\vee j_1,\dots,\ell\vee j_k|\ell\rangle.\]
		Conversely consider a
		generator enriched lattice~$(K,H)=\langle\ell\vee g_1,\dots,\ell\vee g_k|\ell\rangle$
		for some~$\ell\in L$ and~$g_j\in G$ with~$g_j\not\le\ell$ for~$1\le j\le k$.
		The generators of the contraction~$(L,G)/\ell$ are all
		elements~$\ell\vee g$ for~$g\in G$ with~$g\not\le\ell$.
		Thus~$\ell\vee g_1,\dots,\ell\vee g_k$ are generators of~$(L,G)/\ell$, so
		setting~$I=\{\ell\vee g_1,\dots,\ell\vee g_k\}$
		we have that~$(K,H)=((L,G)/\ell)|_I$.
	\end{proof}

\begin{lemma}
\label{geometric minors}
If~$L$ is a geometric lattice then the minors of~$(L,\irr(L))$
are the generator enriched lattices of the form~$\langle \ell_1,\dots,\ell_k|\ell\rangle$
such that~$\ell_i\succ\ell\in L$
for~$1\le i\le k$.
In particular, every minor of~$(L,\irr(L))$
is minimally generated and geometric.
\end{lemma}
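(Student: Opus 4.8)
The plan is to combine the explicit description of minors in Lemma~\ref{minor gen sets} with two standard features of a geometric lattice: that its join irreducibles are exactly its atoms, and that it is semimodular, so that its rank (height) function~$\rk$ is well defined and submodular. Write~$A$ for the atom set of~$L$, so that~$\irr(L)=A$. First I would establish the claimed form. By Lemma~\ref{minor gen sets} every minor of~$(L,\irr(L))$ has the shape~$\langle\ell\vee a_1,\dots,\ell\vee a_k\mid\ell\rangle$ for some~$\ell\in L$ and atoms~$a_j\in A$ with~$a_j\not\le\ell$. The key point is that for an atom~$a\not\le\ell$ the element~$\ell\vee a$ covers~$\ell$: since~$\widehat{0}\prec a$, semimodularity gives~$\ell=\widehat{0}\vee\ell\preceq a\vee\ell$, and~$a\not\le\ell$ forces~$\ell\prec\ell\vee a$ (equivalently,~$\rk(\ell\vee a)\le\rk(\ell)+1$ together with~$\rk(\ell\vee a)>\rk(\ell)$ forces~$\ell\vee a\succ\ell$). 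Setting~$\ell_j=\ell\vee a_j$ exhibits every minor in the asserted form with~$\ell_j\succ\ell$.

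Next I would prove the converse containment. Suppose~$(K,H)=\langle\ell_1,\dots,\ell_k\mid\ell\rangle$ with~$\ell_j\succ\ell$ for each~$j$; I must realize it as a minor. Because~$L$ is atomic, each~$\ell_j$ is a join of atoms, and since~$\ell_j>\ell$ not all of these atoms lie below~$\ell$; choose an atom~$a_j\le\ell_j$ with~$a_j\not\le\ell$. Then~$\ell<\ell\vee a_j\le\ell_j$, and as~$\ell_j$ covers~$\ell$ we get~$\ell\vee a_j=\ell_j$. Thus~$(K,H)=\langle\ell\vee a_1,\dots,\ell\vee a_k\mid\ell\rangle$ with each~$a_j\in\irr(L)$ and~$a_j\not\le\ell$, which is a minor by Lemma~\ref{minor gen sets}. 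This settles the set equality.

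Finally I would prove the ``in particular'' clause, which I expect to be the main obstacle, since meets in~$K$ need not agree with meets in~$L$. Fix a minor~$(K,H)=\langle\ell_1,\dots,\ell_k\mid\ell\rangle$ with~$\ell_j\succ\ell$; its underlying set consists of~$\ell$ together with all joins of nonempty subsets of~$\{\ell_1,\dots,\ell_k\}$, so~$K$ is a join-subsemilattice of~$L$ with~$\widehat{0}_K=\ell$, and each~$\ell_j$ covers~$\ell=\widehat{0}_K$ and so is an atom of~$K$, while any atom of~$K$ must be one of the~$\ell_j$. Hence~$K$ is atomic with atom set~$\{\ell_1,\dots,\ell_k\}$. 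To obtain semimodularity I would show that~$x\mapsto\rk(x)-\rk(\ell)$ is the height function of~$K$ and is submodular on~$K$. For the first part, if~$x\prec_K y$ then, picking an~$\ell_j\le y$ with~$\ell_j\not\le x$, one has~$x<x\vee\ell_j\le y$ in~$K$, whence~$x\vee\ell_j=y$, and then~$\rk(y)=\rk(x\vee\ell_j)=\rk(x)+1$ because~$\ell_j$ covers~$\ell$; thus every cover in~$K$ raises~$\rk$ by one and~$K$ is graded with the stated height function. For submodularity, joins in~$K$ agree with joins in~$L$ while~$x\wedge_K y\le x\wedge_L y$, so
\[
\rk(x)+\rk(y)\ge\rk(x\vee_L y)+\rk(x\wedge_L y)\ge\rk(x\vee_K y)+\rk(x\wedge_K y),
\]
using submodularity and monotonicity of~$\rk$ on~$L$. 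A finite atomic lattice that is graded with submodular rank function is semimodular, hence geometric, so~$K$ is geometric. Since the join irreducibles of a geometric lattice are its atoms, $\irr(K)=\{\ell_1,\dots,\ell_k\}=H$, so~$(K,H)$ is minimally generated, completing the proof.
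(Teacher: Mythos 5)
Your proof is correct, and the first half (identifying the minors with the generator enriched lattices $\langle\ell_1,\dots,\ell_k|\ell\rangle$ with each $\ell_i\succ\ell$, via Lemma~\ref{minor gen sets} and the fact that in a geometric lattice $y$ covers $x$ exactly when $y=x\vee a$ for an atom $a\not\le x$) coincides with the paper's argument. Where you diverge is in proving that the minor $K$ is semimodular. The paper stays at the level of cover relations: it shows that any cover $x\prec_K y$ is already a cover in $L$, and then verifies the local semimodular law directly --- if $x\wedge_K y$ is covered by both $x$ and $y$ in $K$, it argues that $x\wedge_K y$ must in fact equal $x\wedge_L y$, so semimodularity of $L$ forces $x\vee_L y=x\vee_K y$ to cover both. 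You instead upgrade the same cover-preservation fact into the statement that $K$ is graded by $x\mapsto\rk(x)-\rk(\ell)$, check that this rank function is submodular on $K$ by comparing meets and joins in $K$ with those in $L$, and invoke the standard characterization of semimodular lattices as graded lattices with submodular rank function. Both routes hinge on the same key observation (covers in $K$ are covers in $L$); the paper's version is self-contained and avoids citing the rank-function criterion, while yours sidesteps the slightly delicate step of identifying $x\wedge_K y$ with $x\wedge_L y$ under the diamond hypothesis, at the cost of appealing to a classical (unstated in the paper) equivalence. One small nitpick: your justification ``$\rk(x\vee\ell_j)=\rk(x)+1$ because $\ell_j$ covers $\ell$'' deserves a line --- either note that $x\vee\ell_j=x\vee a_j$ for the atom $a_j$ with $\ell_j=\ell\vee a_j$ (since $\ell\le x$), or observe that $x\wedge\ell_j=\ell$ and apply submodularity of $\rk$ in $L$; as written the inference is stated but not derived.
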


	\begin{proof}
		Since~$L$ is geometric,
		for any~$x,y\in L$ we have~$x\prec y$ if and only
		if~$y=x\vee i$ for some~$i\in\irr(L)$. Thus Lemma~\ref{minor gen sets}
		specializes to the claimed form of the generating sets
		of minors of~$(L,\irr(L))$. 
		In particular, for any minor~$(K,H)$ the generating set~$H$
		is the set of atoms of~$K$.

		Let~$(K,\irr(K))=\langle\ell_1,\dots,\ell_k|\ell\rangle$
		be a minor of~$(L,\irr(L))$. In order to show that~$K$
		is semimodular we claim that if~$x\prec y$ in~$K$
		then~$x\prec y$ in~$L$ as well.
		Since~$x\prec y$ there exists~$i$ such
		that~$y=x\vee\ell_i$. We have that~$\ell_i=\ell\vee a$
		for some atom~$a$ of~$L$, hence~$y=x\vee a$.
		Since~$L$ is geometric this implies that~$x\prec y$
		in~$L$.

		Now let~$x,y\in K$ such that~$x\wedge_Ky\prec x$
		and~$x\wedge_Ky\prec y$ in~$K$. Since~$K$ is a subposet
		of~$L$ we have that~$x\wedge_Ky\le x\wedge_Ly$. On the other
		hand, since~$x\wedge_K y$ is covered by~$x$ and~$y$ in~$K$,
		hence in~$L$, we must have~$x\wedge_Ky=x\wedge_Ly$.
		Thus~$x\wedge_Ly$ is covered by~$x$ and~$y$ in~$L$, and
		since~$L$ is semimodular~$x\vee_Ly$ covers~$x$ and~$y$ in~$L$.
		Then since~$x\vee_Ly=x\vee_Ky$ this implies
		that~$x\vee_Ky$ covers~$x$ and~$y$ in~$K$,
		and therefore~$K$ is geometric.
	\end{proof}

Given a graph~$G$, the lattice of flats~$L$ may be viewed as a
lattice of partitions of the vertices
of~$G$. Each flat is associated to the partition whose blocks consist of the connected
components of said flat considered as a subgraph of~$G$. Let~$L(G)$ denote the
generator enriched lattice of flats of~$G$ labeled as partitions.
See Figure~\ref{fig lattice of flats of graph}.

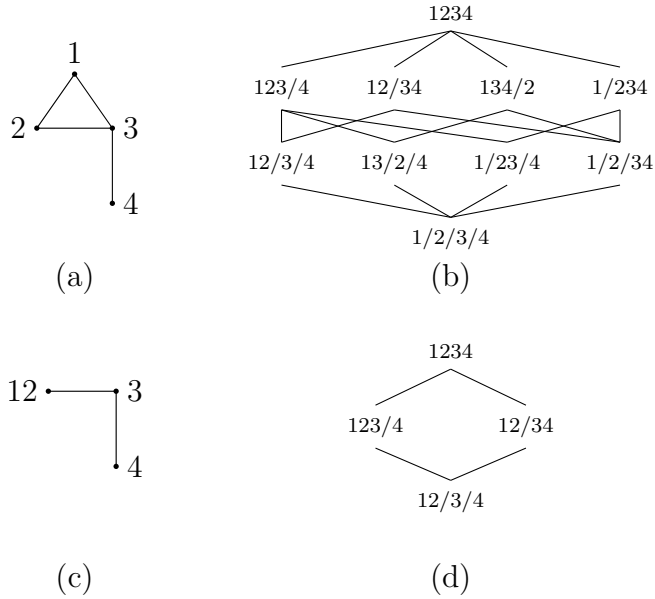
\begin{figure}
\centering
\begin{tikzpicture}
		\node at (0,0)
			{
			\begin{tikzpicture}\begin{scope}
				\node (1) at (0,0) {1};
				\node (2) at (-3/4,-1) {2};
				\node (3) at (3/4,-1) {3};
				\node (4) at (3/4,-2) {4};

				\fill (1.south) circle (1pt) -- (2.east) circle (1pt)
				-- (3.west) circle (1pt) -- (4.west) circle (1pt);

				\draw (1.south) -- (2.east) -- (3.west) -- (1.south);
				\draw (3.west)--(4.west);
			\end{scope}\end{tikzpicture}
			};
		\node at(0,-2){(a)};

		\node at (5,0)
			{
			\begin{tikzpicture}\begin{scope}
				\node (1234) at (0,0) {$\scriptstyle 1234$};
				\node (123/4) at (-9/4,-1) {$\scriptstyle 123/4$};
				\node (12/34) at (-3/4,-1) {$\scriptstyle 12/34$};
				\node (134/2) at (3/4,-1) {$\scriptstyle 134/2$};
				\node (1/234) at (9/4,-1) {$\scriptstyle 1/234$};
				\node (12/3/4) at (-9/4,-2) {$\scriptstyle 12/3/4$};
				\node (13/2/4) at (-3/4,-2) {$\scriptstyle 13/2/4$};
				\node (1/23/4) at (3/4,-2) {$\scriptstyle 1/23/4$};
				\node (1/2/34) at (9/4,-2) {$\scriptstyle 1/2/34$};
				\node (1/2/3/4) at (0,-3) {$\scriptstyle 1/2/3/4$};

				\draw (1/2/3/4.north) -- (12/3/4.south);
				\draw (1/2/3/4.north) -- (13/2/4.south);
				\draw (1/2/3/4.north) -- (1/23/4.south);
				\draw (1/2/3/4.north) -- (1/2/34.south);

				\draw (12/3/4.north) -- (123/4.south);
				\draw (12/3/4.north) -- (12/34.south);
				\draw (13/2/4.north) -- (123/4.south);
				\draw (13/2/4.north) -- (134/2.south);
				\draw (1/23/4.north) -- (123/4.south);
				\draw (1/23/4.north) -- (1/234.south);
				\draw (1/2/34.north) -- (12/34.south);
				\draw (1/2/34.north) -- (134/2.south);
				\draw (1/2/34.north) -- (1/234.south);

				\draw (123/4.north)--(1234.south);
				\draw (12/34.north)--(1234.south);
				\draw (134/2.north)--(1234.south);
				\draw (1/234.north)--(1234.south);
			\end{scope}\end{tikzpicture}
			};
		\node at(5,-2){(b)};
		\node at (0,-4)
			{
			\begin{tikzpicture}\begin{scope}
				\node (2) at (-3/4,-1) {12};
				\node (3) at (3/4,-1) {3};
				\node (4) at (3/4,-2) {4};

				\fill (2.east) circle (1pt)
				-- (3.west) circle (1pt) -- (4.west) circle (1pt);

				\draw (2.east) -- (3.west);
				\draw (3.west)--(4.west);
			\end{scope}\end{tikzpicture}
			};
		\node at(0,-6){(c)};
		\node at(5,-4)
			{
			\begin{tikzpicture}\begin{scope}
				\node(1234)at(0,0) {$\scriptstyle1234$};
				\node(123/4)at(-1,-1){$\scriptstyle123/4$};
				\node(12/34)at(1,-1){$\scriptstyle12/34$};
				\node(12/3/4)at(0,-2){$\scriptstyle12/3/4$};
				\draw(12/3/4.north)--(12/34.south);
				\draw(12/3/4.north)--(123/4.south);
				\draw(12/34.north)--(1234.south);
				\draw(123/4.north)--(1234.south);
			\end{scope}\end{tikzpicture}
			};
		\node at(5,-6){(d)};
	\end{tikzpicture}

\caption{At (a) a graph~$G$ and at (b) the lattice of flats~$L(G)$. At (c) the simplification
of the contraction~$G/\{1,2\}$ and at (d) the contraction~$L(G)/(12/3/4)$.}
\label{fig lattice of flats of graph}
\end{figure}
The minors of the graph~$G$ inherit a vertex labeling by blocks of a partition of the vertices
of~$G$. When an edge is contracted, the label of the new vertex is obtained by merging
the two blocks labeling the vertices of the contracted edge. In this way the minors
of a vertex labeled graph are considered to be themselves vertex labeled graphs.

\begin{theorem}
\label{graph minors}
	Let~$G$ be a vertex labeled graph with unlabeled edges.
	The vertex labeled minors of~$G$ which are simple are in bijection with the minors of
	the minimally generated lattice of flats~$L(G)$ via the map~$H\mapsto L(H)$.
\end{theorem}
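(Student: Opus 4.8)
The plan is to express both the graph side and the lattice side as the same combinatorial data, and then to check that $H\mapsto L(H)$ realizes the resulting bijection. Since $L(G)$ is the lattice of flats of the graphic matroid of $G$ it is geometric, so Lemma~\ref{geometric minors} applies: the minors of $(L(G),\irr(L(G)))$ are exactly the generator enriched lattices $\langle\ell_1,\dots,\ell_k\,|\,\ell\rangle$ with $\ell\in L(G)$ and $\ell_i\succ\ell$, and every such choice occurs. Because a minor retains its minimal element and its generating set, I would first record that the minors of $L(G)$ are in bijection with pairs $(\ell,C)$ consisting of a flat $\ell$ together with a set $C$ of covers of $\ell$.

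I would then translate $(\ell,C)$ into graph data using the standard description of graphic flats: a flat of $L(G)$ is a partition $\pi$ of $V(G)$ each of whose blocks induces a connected subgraph, the flat being the set of all edges internal to a block. Under this correspondence a cover $\ell'\succ\ell$ is obtained by merging two blocks of $\pi$ that are joined by at least one edge of $G$, with parallel edges between a fixed pair of blocks giving the same cover and distinct pairs giving distinct covers. Hence the data $(\ell,C)$ is the same as a pair $(\pi,S)$, where $\pi$ is a connected-block partition of $V(G)$ and $S$ is a set of pairs of blocks of $\pi$, each joined by an edge of $G$.

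Next I would show the simple vertex labeled minors of $G$ are described by exactly the same data. Contracting and deleting edges produces a vertex partition $\pi$ with connected blocks, and conversely every connected-block partition is realized by contracting a spanning forest of each block; simplicity then forbids any surviving edge internal to a block (a loop) and more than one surviving edge between two blocks (a parallel pair), so a simple minor $H$ is determined by $\pi$ together with the set $S$ of pairs of blocks that remain adjacent. Each such $(\pi,S)$ is realizable, and because the vertex labels of $H$ record $\pi$ while its simple unlabeled edge set records $S$, the assignment $H\mapsto(\pi,S)$ is a bijection.

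Composing the two bijections identifies simple vertex labeled minors of $G$ with minors of $L(G)$, and the remaining step, which I expect to be the main obstacle, is to confirm that this identification is the map $H\mapsto L(H)$. The key observation is that, since the vertices of $H$ are labeled by blocks of $\pi$ and hence by subsets of $V(G)$, every flat of $H$ is literally a partition of $V(G)$ coarsening $\pi$ with $G$-connected blocks; thus the flats of $H$ are already elements of $L(G)$ and $L(H)$ sits inside $L(G)$. It then remains to prove the equality $L(H)=\langle C\,|\,\ell\rangle$ of generator enriched lattices: the single-edge flats of $H$ are exactly the covers comprising $C$, and a coarsening of $\pi$ occurs as an $H$-flat precisely when it equals $\ell\vee\bigvee_{i\in X}\ell_i$ for some $X\subseteq C$, because in both descriptions one merges the blocks of $\pi$ linked by the chosen edges. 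The crux is this set equality together with the compatibility of $H$-connected closure with the join operation of the generated subsemilattice of $L(G)$; granting it, $H\mapsto L(H)$ is the composite of the two bijections above, and injectivity in particular is automatic, since a minor of $L(G)$ recovers $\ell$ and $C$, hence $(\pi,S)$, and hence $H$.
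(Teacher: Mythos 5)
Your proposal is correct and follows essentially the same route as the paper: both invoke Lemma~\ref{geometric minors} to write every minor of $(L(G),\irr(L(G)))$ as $\langle\ell_1,\dots,\ell_k\,|\,\ell\rangle$ with the $\ell_i$ covering $\ell$, and both match this data (minimal partition plus a set of block-merging covers) with the simple vertex labeled minor obtained by contracting the edges of the flat $\ell$, keeping one edge per chosen cover, and simplifying. Your version is somewhat more explicit than the paper's at the final step --- verifying that $L(H)$ really equals $\langle\ell_1,\dots,\ell_k\,|\,\ell\rangle$ inside $L(G)$, which the paper disposes of by noting that both objects are determined by the same data --- and the sketch you give of that verification is sound.
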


	\begin{proof}
		Let~$L$ be the lattice of flats of the graph~$G$.
		It may be assumed that the graph~$G$ is simple, that is,
		that~$G$ has no loops or multiple edges. This only changes the labeling
		of elements in the lattice of flats and does not change
		the collection of simple minors of~$G$.
		The inverse of the map~$H\mapsto L(H)$ will be constructed.
		Let~$(K,\irr(K))=\langle\ell_1,\dots,\ell_k|\ell\rangle$ be a minor of~$(L,\irr(L))$.
		Construct a graph~$H$ as follows.
		Each atom of~$L$ corresponds to an edge of~$G$. The
		element~$\ell$ corresponds
		to a set of edges of~$G$; namely, those edges corresponding to an atom
		which is less than or equal to~$\ell$.
		Let~$H'$ be the minor of~$G$ obtained by contracting
		this set of edges corresponding to~$\ell$.
		The vertices of~$H'$ are labeled by the blocks of the partition~$\ell$.
		Each atom~$\ell_i$ in~$K$ is obtained from the partition~$\ell$
		by merging two blocks, and corresponds to an edge in~$H'$.
		Let~$H''$ be the graph obtained from~$H'$ by restricting to these
		edges which correspond to an atom of~$K$. The graph~$H$ is defined
		to be the simplification of~$H''$.

		It remains to show that the map~$K\mapsto H$ constructed above
		and the map~$H\mapsto L(H)$ are inverses. A vertex labeled
		graph~$H$ is determined by the labeling of its vertices and its edges.
		The associated lattice minor~$(K,\irr(K))$ of~$(L,\irr(L))$
		records this same information
		as the minimal partition and the atoms, which in turn
		determines~$K$.
	\end{proof}

The above result generalizes to polymatroids with the appropriate notion replacing
vertex labeled minors.

\begin{definition}
	Let~$r$ be a polymatroid with ground set~$E$.
	A \emph{parallel closed
	pair} is a pair~$(F,s)$ such that~$F\subseteq E$ is a flat of~$r$
	and~$s$ is a polymatroid that may be obtained
	as a deletion of the polymatroid~$r/F$ satisfying
	the following condition: 
	\begin{addmargin}{3em}
		If~$e\in E\setminus F$
		is parallel with respect to~$r/F$ to an element~$f$ of the ground set
		of~$s$ then~$e$
		is an element of the ground set of~$s$ as well. In other words,
		the ground set
		of~$s$ must be a union of parallel classes with respect to~$r/F$.
	\end{addmargin}
\end{definition}

For a graphic matroid the
parallel closed pairs are in bijection with the vertex labeled minors of the graph
obtained by first contracting,
and then deleting entire parallel classes of edges. The vertex labeling naturally
encodes the flat in the parallel closed pair.
Such graphs are in bijection
with the simple minors when the edges are unlabeled. Without an edge labeling
each such graph has one simplification, obtained by identifying parallel edges,
and no two such graphs have
the same simplification. Thus the following theorem is an analogue of
Theorem~\ref{graph minors}.

\begin{theorem}
\label{parallel closed pairs and lattice minors}
	Let~$r$ be a polymatroid and let~$(L,G)$ be the associated
	generator enriched lattice of flats.
	The minors of~$(L,G)$ are in bijection with the parallel
	closed pairs of~$r$.
\end{theorem}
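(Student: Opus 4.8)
The plan is to construct the bijection explicitly and verify it is well defined in both directions. A parallel closed pair $(F,s)$ consists of a flat $F$ together with a deletion $s$ of $r/F$ whose ground set is a union of parallel classes with respect to $r/F$. I would begin by observing that a flat $F$ of $r$ corresponds to an element $\ell\in L$, namely $\ell=\overline{F}=F$, and that contracting $r$ by $F$ corresponds on the lattice side to forming $(L,G)/F$ with minimal element $\ell$, by Proposition~\ref{genlatt and polymatroid minors}. The deletion $s$ of $r/F$ then determines, and is determined by, which generators of $(L,G)/F$ survive. By Lemma~\ref{minor gen sets} a minor of $(L,G)$ has the form $\langle\ell\vee g_1,\dots,\ell\vee g_k\,|\,\ell\rangle$ for $\ell\in L$ and generators $g_j\not\le\ell$, so I would send the pair $(F,s)$ to the minor whose minimal element is $\ell=F$ and whose generators are the elements $\overline{\{e\}}\vee\ell$ (equivalently $\theta/F$ applied to $e$) for $e$ in the ground set of $s$.

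The key steps, in order, are the following. First I would define the forward map $(F,s)\mapsto (K,H)$ as above and check it lands in the set of minors of $(L,G)$ using Lemma~\ref{minor gen sets}, which requires that the surviving generators $\overline{\{e\}}\vee\ell$ are distinct from $\ell$; this holds because $e$ is a nonloop of $r/F$ precisely when $\theta(F\cup\{e\})\ne\theta(F)$. Second I would define the reverse map: given a minor $\langle\ell\vee g_1,\dots,\ell\vee g_k\,|\,\ell\rangle$, set $F=\ell$ (a flat of $r$ since $\ell\in L$), and let the ground set of $s$ be the full preimage under $e\mapsto\overline{\{e\}}$ of $\{g_1,\dots,g_k\}$ together with all elements parallel to them in $r/F$, with $s$ the corresponding deletion of $r/F$. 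Third I would verify that the two maps are mutually inverse. The crucial point here is that the parallel-closedness condition is exactly what makes the reverse map well defined and single valued: two elements $e,e'$ of $E\setminus F$ are parallel with respect to $r/F$ if and only if $\overline{\{e\}}\vee\ell=\overline{\{e'\}}\vee\ell$ as generators of $(L,G)/F$, so requiring the ground set of $s$ to be a union of parallel classes is precisely the requirement that $s$ be recoverable from the surviving generators alone.

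I expect the main obstacle to be the well-definedness argument encoded in that third step, specifically showing the correspondence between parallel classes of $r/F$ and generators of $(L,G)/F$. The forward direction forgets which ground set elements map to a given generator, so for the maps to be inverse I must show that the parallel-closed condition records exactly the information lost: a generator $\ell\vee g_j$ of the minor pulls back to the entire parallel class $\{e\in E\setminus F:\overline{\{e\}}\vee\ell=\ell\vee g_j\}$, and a parallel closed pair by definition includes or excludes each such class as a whole. I would establish the equivalence ``$e,e'$ parallel in $r/F$ $\iff$ $(\theta/F)(\{e\})=(\theta/F)(\{e'\})$'' directly from the definition of parallel (equal ranks $r(F\cup\{e\})=r(F\cup\{e'\})=r(F\cup\{e,e'\})$) together with the fact from Proposition~\ref{polymatroids from genlatts} that equality of closures in a polymatroid matches equality of $\theta$-images; loops of $r/F$ correspond to generators absorbed into $\ell$, and must be treated as the degenerate parallel class that is always deleted. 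Once this equivalence is in hand, both the injectivity (distinct parallel closed pairs give distinct minors, since they differ in $F$ or in some surviving parallel class) and the surjectivity (every minor arises, by taking $F=\ell$ and the union of the relevant parallel classes) follow, completing the bijection.
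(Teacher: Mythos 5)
Your proposal is correct and follows essentially the same route as the paper: the forward map $(F,s)\mapsto((L,G)|_{Y\cup F})/F$ and the inverse recovering $F=\widehat{0}_K$ and the ground set as the union of parallel classes pulled back from the surviving generators are exactly the paper's maps, and your identification of parallel classes of $r/F$ with fibers of $e\mapsto\overline{\{e\}}\vee F$ is the (unstated) heart of the paper's ``observe $g$ is the inverse of $f$.'' One small simplification: since $F$ is a flat, $r/F$ has no loops at all, so the degenerate loop class you worry about never arises.
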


	\begin{proof}
		Let~$E$ be the ground set of~$r$.
		Let~$\overline{\ \cdot\ }:B_E\rightarrow B_E$
		be the closure operator of~$r$. 
		Let~$\theta:\mathscr{B}_E\rightarrow(L,G)$ be the strong surjection
		induced by the ground set map~$e\mapsto\overline{\{e\}}$.

		Let~$(F,s)$ be a parallel closed pair of~$r$
		and let~$Y\subseteq E$ be the ground set of~$s$.
		Define a map~$f$ from the set of parallel closed pairs of
		the polymatroid~$r$ to the set of minors of the generator enriched lattice~$(L,G)$
		by
		\[f(F,s)=((L,G)|_{Y\cup F})/F.\]

		To show~$f$ is a bijection construct the inverse map~$g$.
		Let~$(K,H)$ be a minor of~$(L,G)$ and let~$Y$ be the
		set \[Y=\{y\in E:\overline{\{y\}\cup \widehat{0}_K}\in H\}.\]
		Let~$g(K,H)$ be the pair~$(\widehat{0}_K,(r/\widehat{0}_K)|_Y)$.
		Observe that~$g(K,H)$ is a parallel closed pair of~$r$.
		Furthermore~$g$ is the inverse of~$f$ so the map~$f$ is
		a bijection.
	\end{proof}

\subsection{Minors of distributive lattices}

In the remainder of this section we examine minors of minimally generated distributive
lattices. Recall that the fundamental theorem of finite distributive lattices
states that every finite distributive lattice~$L$ is isomorphic to the lattice of
lower order ideals of the subposet~$\irr(L)$ of~$L$.
The minors of a minimally generated distributive lattice
have an alternative description in terms of certain pairs of subsets
of the poset of irreducibles.

\begin{definition}
	Let~$P$ be a poset. An \emph{order minor} of~$P$ is a pair~$(I,J)$
	of disjoint subsets of~$P$ such that~$J$ is a lower order ideal of~$P$.
\end{definition}

The poset~$P$ itself corresponds to the order minor~$(P,\emptyset)$.
The set of order minors of~$P$ is shown below to be in bijection with the minors
of the minimally generated lattice of lower order ideals of~$P$.
To prove this bijection, the following lemma is needed.

\begin{lemma}
\label{distr no paras}
If~$L$ is a distributive lattice, for any~$\ell\in L$ and any distinct join irreducibles~$i$
and~$j$ such that~$i\vee\ell\ne\ell$ and~$j\vee\ell\ne\ell$ 
the elements~$i\vee\ell$ and~$j\vee\ell$ are distinct.
\end{lemma}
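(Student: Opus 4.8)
The statement asserts that in a distributive lattice, distinct join irreducibles $i,j$ (each contributing a proper join with a fixed element $\ell$, i.e. $i\vee\ell\neq\ell$ and $j\vee\ell\neq\ell$) yield distinct joins $i\vee\ell$ and $j\vee\ell$. The natural approach is to argue by contradiction: suppose $i\vee\ell=j\vee\ell$ and derive that $i$ and $j$ cannot be distinct join irreducibles under the hypotheses. The key tool will be the distributive law, which lets me intersect the supposed common value with $i$ (or $j$) and exploit the defining property of join irreducibles, namely that a join irreducible $i$ satisfies: if $i=a\vee b$ then $i=a$ or $i=b$.

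Concretely, I would start from $i\vee\ell=j\vee\ell$ and meet both sides with $i$. By the absorption and distributive laws, $i=i\wedge(i\vee\ell)=i\wedge(j\vee\ell)=(i\wedge j)\vee(i\wedge\ell)$. Since $i$ is join irreducible, one of the two terms must equal $i$: either $i\wedge j=i$, giving $i\le j$, or $i\wedge\ell=i$, giving $i\le\ell$. The latter case contradicts $i\vee\ell\neq\ell$, so we must have $i\le j$. By the symmetric computation meeting with $j$ and using that $j$ is join irreducible together with $j\vee\ell\neq\ell$, I obtain $j\le i$. Combining $i\le j$ and $j\le i$ forces $i=j$, contradicting the assumption that $i$ and $j$ are distinct.

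The computation is essentially symmetric in $i$ and $j$, so once the first half is done the second half follows by interchanging their roles. I expect the only real subtlety to be bookkeeping: making sure the distributive law is applied in the correct direction and that the join-irreducibility dichotomy is invoked with the right term ruled out by the hypothesis $i\vee\ell\neq\ell$ (respectively $j\vee\ell\neq\ell$). There is no genuine obstacle here—the result is a clean consequence of distributivity plus the definition of join irreducibility—so the proof should be short, and the main care is simply to record both directions of the inequality cleanly before concluding $i=j$.
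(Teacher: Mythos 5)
Your proof is correct, but it takes a genuinely different route from the paper's. The paper invokes the fundamental theorem of finite distributive lattices, realizes $L$ as the lattice of lower order ideals of the poset $P=\irr(L)$, assumes without loss of generality that $i\not\le j$, and exhibits a witness element $p\in P$ (the generator of the principal ideal $i$) that lies in $i\vee\ell$ but not in $j\vee\ell$, since joins of ideals are unions. You instead argue purely equationally: from $i\vee\ell=j\vee\ell$, absorption and distributivity give $i=i\wedge(j\vee\ell)=(i\wedge j)\vee(i\wedge\ell)$, join irreducibility of $i$ forces $i\le j$ or $i\le\ell$, the latter is excluded by $i\vee\ell\ne\ell$, and the symmetric computation yields $j\le i$, whence $i=j$. (Note you are in effect reproving that join irreducibles are join-prime in a distributive lattice.) Your argument is more elementary and self-contained --- it needs no representation theorem and works in any distributive lattice --- while the paper's choice fits naturally into the surrounding discussion, which already works with $L$ as a lattice of lower order ideals and reuses that description in the propositions that follow. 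Both proofs are complete and correct.
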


	\begin{proof}
		Let~$L$ be the lattice of lower order ideals of
		a poset~$P$, necessarily isomorphic to~$\irr(L)$.
		It may be assumed without loss of generality that~$i\not\le j$ in~$L$.
		Let~$p\in P$ be the element such that the principal lower
		order ideal of~$P$ generated by~$p$ is the join irreducible~$i$
		of~$L$. The fact that~$i\vee\ell\ne\ell$ implies that~$p$ is
		not contained in the ideal~$\ell$. Since~$i\not\le j$
		the element~$p$ is not contained in the ideal~$j$.
		As a consequence~$p$ is not contained in the ideal~$j\vee\ell$
		since the join in~$L$ corresponds to the union of lower order ideals.
		This establishes that~$i\vee\ell\ne j\vee\ell$.
	\end{proof}

When~$(L,\irr(L))$ is the minimally generated lattice of lower order ideals of
a poset~$P$ there is an implicit bijection between~$P$ and the generating
set~$\irr(L)$. Through this bijection deletions and contractions of~$(L,\irr(L))$ may be
indexed by subsets of~$P$.

\begin{proposition}
\label{order minors bijection}
Let~$L$ be the lattice of lower order ideals of a poset~$P$.
The order minors of~$P$ and the minors of~$(L,\irr(L))$ are in bijection via
the map
\[(I,J)\mapsto((L,\irr(L))|_{I\cup J})/J.\]
\end{proposition}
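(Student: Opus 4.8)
The plan is to produce an explicit two-sided inverse to the stated map, which I denote $\Phi$, relying on the description of minors in Lemma~\ref{minor gen sets} and the fundamental theorem of finite distributive lattices. Throughout I would identify $P$ with $\irr(L)$ by sending each $p \in P$ to the principal lower order ideal $g_p$ it generates, and use that the join $\bigvee_{p \in S} g_p$ in $L$ is the down-closure of $S \subseteq P$; in particular $\bigvee_{p \in J} g_p = J$ whenever $J$ is a lower order ideal.

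First I would compute the image of $\Phi$. Unwinding the definitions of restriction and contraction, the minor $((L, \irr(L))|_{I \cup J})/J$ has minimal element $\bigvee_{p \in J} g_p = J$ and generating set $\{g_p \vee J : p \in I \cup J\} \setminus \{J\}$. For $p \in J$ one has $g_p \vee J = J$, so these generators are discarded; for $p \in I$ the disjointness $I \cap J = \emptyset$ together with $J$ being a lower ideal gives $g_p \not\le J$, so $g_p \vee J \ne J$ survives. Hence $\Phi(I,J) = \langle\{g_p \vee J : p \in I\}\,|\,J\rangle$, which is a minor by Lemma~\ref{minor gen sets}.

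Next I would define the candidate inverse $\Psi$ with no arbitrary choices. Given a minor $(K, H)$, set $J = \widehat{0}_K$, which is a lower order ideal of $P$, and set $I = \{p \in P : p \notin J \text{ and } \widehat{0}_K \vee g_p \in H\}$. Then $I$ and $J$ are disjoint and $J$ is a lower order ideal, so $\Psi(K,H) = (I,J)$ is an order minor. The two compositions then reduce to routine verifications, provided the passage from $(I,J)$ to its image faithfully records $I$ in the generating set.

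The hard part will be exactly this faithfulness, and it is where distributivity enters through Lemma~\ref{distr no paras}. By Lemma~\ref{minor gen sets} every generator of $(K,H)$ has the form $\widehat{0}_K \vee g$ for a join irreducible $g \not\le \widehat{0}_K$, and Lemma~\ref{distr no paras} shows that distinct such join irreducibles yield distinct generators; thus $g \mapsto \widehat{0}_K \vee g$ is injective on $\{g \in \irr(L) : g \not\le \widehat{0}_K\}$. This injectivity guarantees that the generating set $H$ determines the underlying set of join irreducibles uniquely, so that the $I$ defined above is exactly the index set of the generators produced by $\Phi$. Combined with the image computation, this yields $\Phi \circ \Psi = \mathrm{id}$ and $\Psi \circ \Phi = \mathrm{id}$, completing the bijection.
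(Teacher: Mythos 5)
Your proposal is correct and follows essentially the same route as the paper: you construct the explicit inverse by reading off $J$ as the irreducibles below $\widehat{0}_K$ and $I$ as those $p$ with $\widehat{0}_K\vee g_p\in H$, and you invoke Lemma~\ref{distr no paras} at exactly the point where the paper does, namely to see that $H$ determines $I$ uniquely. Your writeup is somewhat more detailed in unwinding the restriction and contraction and in checking $g_p\not\le J$ for $p\in I$, but the argument is the same.
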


	\begin{proof}
		Define a map from minors of~$(L,\irr(L))$ to order minors of~$P$
		as follows. Given a minor~$(K,H)$ of~$(L,\irr(L))$ define~$J$ to be the
		subset of~$P$ corresponding to the join irreducibles in~$L$
		which are less than or equal to~$\widehat{0}_K$.
		Define~$I$ to be the subset of~$P$ consisting of all elements
		whose corresponding join irreducible~$i$ of~$L$
		satisfies~$i\vee\widehat{0}_K\in H$. Lemma~\ref{distr no paras} implies that~$I$
		is the unique set satisfying~$(K,H)=((L,\irr(L))|_{I\cup J})/J$.
		The inverse of this map is given by~$(I,J)\mapsto((L,\irr(L)|_{I\cup J})/J$
		so we have a bijection.
	\end{proof}

Not only do the order minors of a poset index the minors of the minimally
generated lattice of lower order ideals, 
the order minors also describe the isomorphism types of the lattice minors.

\begin{proposition}
\label{distr minors}
	Let~$P$ be a poset and~$L$ the lattice of lower order
	ideals of~$P$, and let~$(I,J)$ be an order minor of~$P$.
	The minor~$((L,\irr(L))|_{I\cup J})/J$ of~$(L,\irr(L))$
	is isomorphic to the minimally generated lattice of lower
	order ideals of~$I$.
\end{proposition}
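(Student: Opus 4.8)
The plan is to identify the underlying lattice $L$ with the lattice $J(P)$ of lower order ideals of $P$ via the fundamental theorem, so that the join irreducible of $L$ indexed by $p\in P$ is the principal lower order ideal $\downarrow_P p=\{q\in P:q\le p\}$ and the join of $L$ is union of ideals. First I would pin down the explicit form of the minor $M=((L,\irr(L))|_{I\cup J})/J$. Writing $\downarrow_P S=\bigcup_{p\in S}\downarrow_P p$ for the ideal generated by a set $S\subseteq P$, the hypothesis that $J$ is a lower order ideal gives $\bigvee_{p\in J}\downarrow_P p=J$, so by Lemma~\ref{minor gen sets} the minor $M$ has minimal element the ideal $J$ and generating set $\{\downarrow_P p\cup J:p\in I\}$; its underlying set is then $\{J\cup\downarrow_P S:S\subseteq I\}$. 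That these generators are pairwise distinct and differ from $J$ is exactly Lemma~\ref{distr no paras}, applied with $\ell=J$.

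Next I would write down the candidate isomorphism. Let $K$ denote the lattice of lower order ideals of the induced subposet $I$, with minimal generating set $\irr(K)$ consisting of the principal ideals $\downarrow_I p$ for $p\in I$. Define $\Phi\colon K\to M$ by $\Phi(A)=J\cup\downarrow_P A$ for a lower order ideal $A$ of $I$. Since joins in both $K$ and $M$ are given by union of ideals (the latter because $M$ is a join subsemilattice of $L$ by Remark~\ref{first remark}), the map $\Phi$ is join-preserving, and it sends the generator $\downarrow_I p$ of $K$ to $J\cup\downarrow_P(\downarrow_I p)=\downarrow_P p\cup J$, which is a generator of $M$. Thus $\Phi$ is a strong map, and it suffices to show it is a bijection.

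The main obstacle, and the only place the combinatorics is delicate, is the bijectivity of $\Phi$: because $I$ carries the order induced from $P$, the ideal $\downarrow_P S$ of a set $S\subseteq I$ can contain elements of $P$ lying outside $I\cup J$, so one cannot simply read off $A$ from $\Phi(A)$ without argument. For surjectivity I would, given $S\subseteq I$, set $A=\downarrow_P S\cap I$, which is a lower order ideal of $I$, and verify the identity $\downarrow_P(\downarrow_P S\cap I)=\downarrow_P S$ (the inclusion $\subseteq$ is immediate, and $\supseteq$ holds since $S\subseteq\downarrow_P S\cap I$); this yields $\Phi(A)=J\cup\downarrow_P S$, the general element of $M$. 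For injectivity I would intersect the equation $\Phi(A_1)=\Phi(A_2)$ with $I$: since $I\cap J=\emptyset$ and each $A_k$ is a lower order ideal of $I$, we have $(J\cup\downarrow_P A_k)\cap I=\downarrow_P A_k\cap I=A_k$, forcing $A_1=A_2$. With $\Phi$ a join-preserving bijection carrying generators to generators, it is a strong bijection, so $M\cong(K,\irr(K))$ as generator enriched lattices, which is the claim.
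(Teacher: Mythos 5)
Your proof is correct and follows essentially the same route as the paper's: you identify the minor as the ideals of the form $J\cup(\text{ideal generated by a subset of }I)$ and exhibit the same pair of mutually inverse maps (union with $J$ and the ideal it generates in one direction, intersection with $I$ in the other). Your write-up is somewhat more explicit about why these maps are well defined, mutually inverse, and strong, but the underlying argument is the one in the paper.
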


	\begin{proof}
		Let~$(I,J)$ be an order minor of~$P$
		and let~$(K,H)=((L,\irr(L))|_{I\cup J})/J$.
		It is claimed that~$K$ consists of the lower order ideals of~$P$
		whose maximal elements are all contained in~$I\cup J$ and which include~$J$.
		Observe that~$(L,\irr(L))|_{I\cup J}$ is generated by the principal lower
		order ideals which are themselves generated by an element of~$I\cup J$.
		Hence this lattice consists of all lower order ideals of~$P$
		whose maximal elements are contained in~$I\cup J$.
		The generators of~$((L,\irr(L))|_{I\cup J})/J$ are thus each the union of
		the lower order ideal~$J$ of~$P$ with a principal lower
		order ideal of~$P$ which is generated by an element of~$I$.
		Such lower order ideals as a join subsemilattice of~$L$
		generate the set of lower order ideals of~$P$ which include~$J$
		and whose maximal elements are contained in~$I\cup J$.
		
		Let~$(M,\irr(M))$
		be the minimally generated lattice of lower order ideals
		of the subposet~$I$ of~$P$. Define a map~$f:(K,H)\rightarrow(M,\irr(M))$
		by~$f(\Lambda)=\Lambda\cap I$.
		Define a map~$g:(M,\irr(M))\rightarrow(K,H)$
		by letting~$g(\Lambda)$ be the lower order ideal of~$P$
		generated by~$\Lambda\cup J$. Observe this is the inverse of~$f$ since
		every ideal in~$L$ includes~$J$ and
		has maximal elements which are contained in~$I\cup J$.
		Therefore~$K$ is isomorphic to~$M$ as claimed.
	\end{proof}

\section{Acknowledgments}
	The author thanks Richard Ehrenborg and Margaret Readdy for extensive comments.

\bibliography{bib}{}

\begin{thebibliography}{1}

\bibitem{strongMapsCrapo}
H.~Crapo.
\newblock Structure theory for geometric lattices.
\newblock {\em Rendiconti del Seminario Matematico della Università di
  Padova}, 38:14--22, 1967.

\bibitem{combinatorialGeometries}
H.~Crapo and G.-C. Rota.
\newblock {\em On the Foundations of Combinatorial Theory: Combinatorial
  Geometries}.
\newblock The M.I.T. Press, Cambridge, 1970.

\bibitem{crawley-dilworth}
P.~Crawley and R.~Dilworth.
\newblock {\em Algebraic Theory of Lattices}.
\newblock Prentice Hall, Inc., Englewood Cliffs, 1973.

\bibitem{edmondsOrig}
J.~Edmonds.
\newblock Submodular functions, matroids, and certain polyhedra.
\newblock In {\em Combinatorial Structures and Their Applications}, volume~38,
  pages 69--87, New York-London-Paris, 1970. Gordon and Breach, Science
  Publishers.
\newblock reprinted in \cite{edmondsReprint}.

\bibitem{edmondsReprint}
J.~Edmonds.
\newblock Submodular function, matorids, and certain polyhedra.
\newblock In {\em Combinatorial Optimization --- Eureka, You Shrink!: Papers
  Dedicated to Jack Edmonds 5th International Workshop Aussois, France, March
  5--9, 2001 Revised Papers}, pages 11--26. Springer Berlin Heidelberg, Berlin,
  Heidelberg, 2003.

\bibitem{higgs}
D.~Higgs.
\newblock Strong maps of geometries.
\newblock {\em Journal of Combinatorial Theory}, 5:185--191, 1968.

\bibitem{critProblems}
J.~Kung.
\newblock Critical problems.
\newblock In J.~Oxley J.~Bonin and B.~Servatius, editors, {\em Matroid Theory},
  pages 1--128, Providence, 1995. The American Mathematical Society.

\end{thebibliography}
\bibliographystyle{plain}

\end{document}